\documentclass[12pt]{amsart}
\usepackage{amsfonts,amssymb,amsthm,amscd}

\newtheorem{theorem}{Theorem}

\newtheorem{lemma}{Lemma}

\newtheorem{proposition}{Proposition}
\newtheorem{corollary}{Corollary}

\theoremstyle{definition}
\newtheorem{definition}{Definition}
\theoremstyle{remark}

\newtheorem{remark}{Remark}

\newcommand{\aaa}{\mathfrak{a}}
\newcommand{\res}{\operatorname{res}}
\newcommand{\mix}{\mathcal{M}}

\newcommand{\abs}[1]{\left|{#1}\right|}

\newcommand{\sprod}[2]{\left\langle#1,#2\right\rangle}

\newcommand{\ff}{\mathbb{F}}
\newcommand{\bbc}{\mathbb{C}}

\newcommand{\oo}{\mathcal{O}}

\newcommand{\ccc}{\mathcal{C}}

\newcommand{\bs}{\backslash}
\newcommand{\Hom}{\operatorname{Hom}}

\newcommand{\diag}{\operatorname{diag}}

\newcommand{\zz}{\mathbb{Z}}

\newcommand{\cc}{\mathbb{C}}

\newcommand{\rr}{\mathbb{R}}

\newcommand{\A}{\mathbb{A}}

\author{Omer Offen and Eitan Sayag}

\address{Humboldt-Universit\"at zu Berlin, Mathematich-Naturwissenschaftliche Fakult\"at II,
Institut f\"ur Mathematik, Sitz: Rudower Chausee 25, D-10099 Berlin,
Germany.} \email{offen@mathematik.hu-berlin.de}

\address{Einstein Institute of Mathematics,
Edmond J. Safra Campus, Givat Ram, The Hebrew University of
Jerusalem, Jerusalem, 91904, Israel} \email{sayag@math.huji.ac.il}

\title[Mixed Periods]{Global Mixed Periods and local Klyachko models for the general linear
group}

\begin{document}
\begin{abstract}
We show that every irreducible representation in the discrete
automorphic spectrum of $GL_n(\A)$ admits a non vanishing mixed
(Whittaker-symplectic) period integral. The analog local problem is
a study of models first considered by Klyachko over a finite field.
Locally, we show that for a $p$-adic field $F$ every irreducible,
unitary representation of $GL_n(F)$ has a Klyachko model.
\end{abstract}

\maketitle
\section{Introduction}\label{introduction}

Fundamental to the theory of automorphic forms on $GL_n$ is the fact
that a cuspidal automorphic representation admits a global Whittaker
functional. Other period integrals were considered for certain
representations in the residual spectrum. The study of global
symplectic period integrals for $GL_n$ was initiated by Jacquet and
Rallis in \cite{MR93b:22035}. They were further studied in
\cite{OII}. In \cite{Odist}, the first named author characterized
all irreducible representations in the discrete automorphic spectrum
that admit a symplectic period. The main global result of the
present work provides a non zero period integral for any irreducible
representation in the discrete spectrum. Namely, following
\cite{MR1216185}, we consider a certain finite list of
Whittaker-symplectic period integrals and show that every discrete
spectrum representation of $GL_n$ admits one of them.

The mixed period integral is factorizable (see Corollary
\ref{factorizable}) and our global results have local analogues. The
local results of this work continue the study of symplectic models
considered in \cite{MR91k:22036, mix}. In \cite{kly}, Klyachko
introduced certain mixed (Whittaker-symplectic) models in the
context of $GL_n$ over a finite field. Our main local result extends
the work of Heumos and Rallis. We show that every irreducible,
unitary representation of $GL_n$ over a $p$-adic field has a
Klyachko model. This was previously obtained in \cite{MR91k:22036}
for $n \le 4$. See also \cite{Nien}.

To describe our results more precisely, we set the necessary
notation. Let $F$ be either a number field or a $p$-adic field. In
the global case, denote by $\A=\A_F$ the ring of ad\`eles of $F$.
Let $G_r=GL_r$ be regarded as an algebraic group defined over $F$
and let $U_r$ denote the group of upper triangular unipotent
matrices in $G_r$.

Fix $n$ and let $G=G_n$. For any decomposition $n=r+2k$ we
consider a subgroup of $G_{n}$ defined by
\[
    H_{r,2k}=\{
    \left(
        \begin{array}{cc}
          u & X \\
          0 & h \\
        \end{array}
    \right)
 \in G:u \in U_r,\,X \in M_{r \times 2k} \text{ and } h \in
 Sp(2k)\}.
\]
Here
\[
    Sp(2k)=\{g \in G_{2k}:{}^t g
    \left(
        \begin{array}{cc}
            & w_k \\
            -w_k &
        \end{array}
    \right)g=
    \left(
        \begin{array}{cc}
            & w_k \\
            -w_k &
        \end{array}
    \right)\}
\]
and $w_k \in G_k$ is the permutation matrix whose $(i,j)$th entry is
$\delta_{k+1-i,j}$. Let $\psi$ be a non trivial character of $F$ in
the local case (resp. of $F \bs \A$ in the global case). We
associate to $\psi$ the character $\psi_r$ of $U_r(F)$ (resp.  of
$U_r(F) \bs U_r(\A)$) defined by
\[
    \psi_r(u)=\psi(u_{1,2}+\cdots+u_{r-1,r}).
\]
By abuse of notation we will also denote by $\psi_r$ the character
of $H_{r,2k}(F)$ (resp. of $H_{r,2k}(F) \bs H_{r,2k}(\A)$) defined
by
\[
    \psi_r     \left(
        \begin{array}{cc}
          u & X \\
          0 & h \\
        \end{array}
    \right)=\psi_r(u).
\]
We now describe our main results, first in the global case and then
in the local case.

\subsection{The Global case}\label{globalintro}
Let $F$ be a number field with ad\`ele ring $\A$. We denote by
$Z_G$ the center of $G$. Fix once and for all a unitary character
$\xi$ of $Z_G(F) \bs Z_G(\A)$ and denote by $L^2(Z_G(\A)G(F) \bs
G(\A),\xi)$ the space of functions $\phi$ on $G(F) \bs G(\A)$ such
that $\phi(z g)=\xi(z)\phi(g)$ for all $z \in Z_G(\A),\,g \in G(\A)$
and
\[
\int_{Z_G(\A)G(F) \bs G(\A)}\abs{\phi(g)}^2\ dg <\infty.
\]
We have an orthogonal decomposition
\[
    L^2(Z_G(\A)G(F) \bs G(\A),\xi)=L^2_{disc}(G,\xi) \oplus L^2_{cont}(G,\xi)
\]
of the automorphic spectrum into a discrete part and a continuous
part. The discrete part decomposes further as a direct sum of
irreducible representations, each appearing with multiplicity one.
We say that $\pi$ is a discrete spectrum automorphic representation
of $G(\A)$ with central character $\xi$ if it embeds into
$L^2_{disc}(G,\xi)$ and we refer to this embedding as the
automorphic realization of $\pi$. In \cite{MR91b:22028}, M{\oe}glin
and Waldspurger show that the irreducible components of
$L^2_{disc}(G,\xi)$ are precisely the representations $L(\sigma,t)$
parameterized by pairs $(\sigma,t)$ where $n=rt$ and $\sigma$ is a
cuspidal automorphic representation of $G_r(\A)$ with central
character appropriately related to $\xi$ (see the first paragraph of
\S\ref{global} for notation and the precise statement).

Let $H$ be an algebraic subgroup of $G$ and let $\chi$ be a
character of $H(F)\bs H(\A)$. For an automorphic form $\phi$,
whenever the integral converges, we define
\[
    l_{H}^\chi (\phi)=\int_{H(F) \bs
    H(\A)}\phi(h)\chi(h)dh.
\]
When $\chi$ is the trivial character we will also write $l_H$ for
$l_H^\chi$.
\begin{definition}
We say that an automorphic representation $\pi$ is $(H,\chi)-$
\emph{distinguished} if there is an automorphic form $\phi$ in the
space of $\pi$ such that $l_H^\chi(\phi)\not=0$. When $\chi$ is the
trivial character we will then say that $\pi$ is $H-$distinguished.
\end{definition}
Our main global result is expressed in terms of the classification
of the discrete spectrum as follows.
\begin{theorem}\label{globalmainintro}
Let $\pi$ be an irreducible, discrete spectrum automorphic
representation of $G(\A)$ (of central character $\xi$). Then, there
exists an integer $k$, $0 \le k \le [\frac n2]$ such that $\pi$ is
$(H_{n-2k,2k},\psi_{n-2k})-$distinguished. More precisely, if
$\pi=L(\sigma,t)$ and
\begin{equation}\label{def:k}
\kappa(\pi)=r [\frac t2]
\end{equation}
then $\pi$ is
$(H_{n-2\kappa(\pi),2\kappa(\pi)},\psi_{n-2\kappa(\pi)})-$distinguished.
\end{theorem}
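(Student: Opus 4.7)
The plan is to treat the parity of $t$ separately: the even case is already in hand, and the odd case I would obtain by realizing $\pi$ as a residue of an Eisenstein series induced from the even case together with a cuspidal datum.

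If $t = 2m$, then $\kappa(\pi) = rm = n/2$ so $n - 2\kappa(\pi) = 0$ and $H_{0,n} = Sp(n)$ with $\psi_0$ trivial; the assertion is precisely the $Sp(n)$-distinction of $L(\sigma, 2m)$ already established in \cite{Odist}. If $t = 2m+1$, then $n - 2\kappa(\pi) = r$; the sub-case $m = 0$ (cuspidal $\sigma$) is the existence of the global Whittaker functional, so assume $m \ge 1$. Set $\tau = L(\sigma, 2m)$, a discrete spectrum representation of $GL_{2rm}(\A)$ for which the even case supplies a non-zero symplectic period $l_{Sp(2rm)}$. Form the Eisenstein series $E(\phi; s)$ on $G(\A)$ from the maximal parabolic $P = MU$ of type $(r, 2rm)$, induced from $\sigma \abs{\det}^{s} \otimes \tau \abs{\det}^{-s/(2m)}$ with $\phi$ an appropriate section. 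M{\oe}glin--Waldspurger and the collapsing of iterated residues realize $\pi = L(\sigma, 2m+1)$ as $\res_{s=s_0} E(\phi; s)$ for a distinguished $s_0 > 0$.

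For $\re(s)$ deep in the convergence region, the plan is to unfold
\[
    l^{\psi_r}_{H_{r,2rm}}\bigl(E(\phi; s)\bigr) = \sum_{\gamma \in P(F) \bs G(F) / H_{r,2rm}(F)} I_\gamma(\phi; s),
\]
and then to take $\res_{s=s_0}$. Cosets on which $\psi_r$ restricts non-trivially to a unipotent subgroup transverse to the Whittaker direction of $\sigma$ contribute zero by the cuspidality of $\sigma$. Among the remaining cosets, a single open orbit $\gamma_0$ should align the $U_r$-factor of $H_{r,2rm}$ with the standard Whittaker on the $GL_r$ block and the $Sp(2rm)$-factor with the symplectic period on the $GL_{2rm}$ block. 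A standard intertwining manipulation then factors $I_{\gamma_0}(\phi; s)$ as $L^*(s) \cdot \ww_\sigma(\phi_1) \cdot l_{Sp(2rm)}(\phi_2)$ up to Tate-type local factors, where $L^*(s)$ is a Rankin--Selberg integral for $\sigma \times \tau$.

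To conclude, I would choose $\phi$ with $\ww_\sigma(\phi_1) \neq 0$ (by cuspidality) and $l_{Sp(2rm)}(\phi_2) \neq 0$ (by the even case), verify that $L^*(s)$ has a pole of the correct order at $s_0$ with non-zero residue — a direct $L$-function computation for $\sigma \times \tau$ — and exchange residue with period integral once uniform convergence has been secured. The resulting identity $l^{\psi_r}_{H_{r,2rm}}(\res_{s=s_0} E(\phi; s)) \neq 0$ shows that $\pi$ is $(H_{r,2rm}, \psi_r)$-distinguished. The main obstacle I anticipate is the orbit analysis: the mixing of a Whittaker character with a symplectic form produces a $P(F) \bs G(F) / H_{r,2rm}(F)$ double coset combinatorics more intricate than in either the purely-Whittaker or purely-symplectic unfolding, and pinning down exactly which coset contributes at $s_0$ — while controlling the analytic behavior of the remaining orbital integrals through the residue operation — is where the real work will lie.
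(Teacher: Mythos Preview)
Your overall strategy (even case from \cite{Odist}; odd case by relating the mixed period to a Whittaker functional on $\sigma$ tensored with a symplectic period on $L(\sigma,2m)$) matches the paper's, but you have overlooked a structural simplification that makes the orbit analysis you anticipate unnecessary. The subgroup $H_{r,2mr}$ sits \emph{inside} the maximal parabolic $Q$ of type $(r,2mr)$: indeed $H_{r,2mr}\simeq (U_r\times Sp(2mr))\ltimes V$ where $V$ is the unipotent radical of $Q$. Hence for any $\phi$ in the discrete spectrum one has, by integrating first over $V(F)\backslash V(\A)$,
\[
    l_{H_{r,2mr}}^{\psi_r}(\phi)=(l_{U_r}^{\psi_r}\otimes l_{Sp(2mr)})(\phi_Q[e]),
\]
with no unfolding and no double cosets. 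The paper then applies this to $\phi$ in the automorphic realization of $\pi=L(\sigma,2m+1)$ and reduces the problem to showing that the constant-term map $\phi\mapsto\phi_Q[e]$ surjects onto $\sigma[-m]\otimes L(\sigma,2m)[\tfrac12]$. That surjectivity is established by realizing $\pi$ as a multi-residue of the Eisenstein series from the parabolic of type $(r,\dots,r)$ and computing its constant term along $Q$ via the standard sum over ${}_LW_M^c$; a short pole-counting argument leaves a single Weyl element contributing.

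By contrast, your plan computes the period of an Eisenstein series from $Q$ \emph{before} taking the residue, which forces you into the $Q(F)\backslash G(F)/H_{r,2mr}(F)$ double-coset analysis and the convergence/interchange issues you flag. This is not wrong in principle, but it is working against the grain: because $H\subset Q$, the identity coset already carries the entire contribution you want, and the paper's ``residue first, then period via constant term'' order extracts it directly. If you rewrite your argument with that containment in mind, the anticipated obstacle disappears and the proof becomes a short constant-term computation rather than an unfolding.
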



Discrete spectrum automorphic representations are realized as
multi-residues of Eisenstein series. Our proof of Theorem
\ref{globalmain} (that also implies Theorem \ref{globalmainintro})
relies on formula \eqref{periodofres} that expresses the mixed
period $l_{H_{n-2k,2k}}^{\psi_{n-2k}}$ of the multi-residue of an
Eisenstein series in terms of Whittaker and purely symplectic
periods.
\subsection{The local case}\label{locint}
Let $F$ be a $p$-adic field. We will consider only smooth
representations of $G(F)$. In particular, when we say that the
representation $\pi$ of $G(F)$ is unitary we really mean that
$\pi$ is a smooth representation that has a unitary structure.

Let $H$ be an algebraic subgroup of $G$ and let $\chi$ be a
character of $H(F)$.
\begin{definition}
We say that a representation $\pi$ of $G(F)$ is $(H,\chi)-$
\emph{distinguished} if $\Hom_{H(F)}(\pi,\chi)\not=0$. If $\chi$ is
the trivial character we also say that $\pi$ is $H-$distinguished.
\end{definition}

\begin{theorem}\label{localmainintro}
Let $\pi$ be an irreducible, unitary representation of $G(F)$. There
exists an integer $k$, $0 \le k \le [\frac n2]$ such that $\pi$ is
$(H_{n-2k,2k},\psi_{n-2k})-$ distinguished.
\end{theorem}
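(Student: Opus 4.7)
The plan is to combine three ingredients. First, Tadi\'c's classification of the irreducible unitary dual of $G(F)$ reduces the theorem to parabolic inductions of products of twisted Speh representations. Second, the global Theorem \ref{globalmainintro} together with the factorizability of the mixed period (Corollary \ref{factorizable}) prove that every local Speh has a Klyachko model. Third, a heredity principle states that a parabolic induction of Klyachko-distinguished representations is itself Klyachko-distinguished, with the data $(r, 2k)$ adding up block by block.

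By Tadi\'c, every irreducible unitary $\pi$ of $G(F)$ is an irreducible parabolic induction $\pi_1 \times \cdots \times \pi_s$ in which each $\pi_i$ has the form $\speh(\delta, m)|\det|^{\alpha}$ with $\delta$ a discrete series, $m \ge 1$ an integer, and $\alpha \in (-1/2, 1/2)$; the unitary Speh factors correspond to $\alpha = 0$, and the complementary series contribute paired factors with opposite signs of $\alpha$. A key elementary observation is that Klyachko distinction is preserved under twists by characters of the form $|\det|^{\alpha}$: for any $h \in H_{r,2k}(F)$ the upper-left block is unipotent and the lower-right block lies in $Sp(2k)$, so both have determinant one and the twist is trivial on $H_{r,2k}(F)$. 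Hence each $\pi_i$ has a Klyachko model of the same type as its untwisted Speh counterpart.

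To handle the untwisted Speh representations I would use the global result. By M{\oe}glin--Waldspurger, any local $\speh(\delta, m)$ of $G_{dm}(F)$ arises as the local component at a chosen place $v_0$ of a global residual representation $L(\sigma, m)$, for a suitable cuspidal $\sigma$ of $G_d(\A)$ whose local component at $v_0$ is $\delta$. Applying Theorem \ref{globalmainintro} to $L(\sigma, m)$ gives a non-vanishing mixed period with parameters $(r, 2k) = (d(m - 2\lfloor m/2 \rfloor),\, 2d\lfloor m/2 \rfloor)$. Corollary \ref{factorizable} then expresses this global period on factorizable vectors as an Euler product of local Klyachko functionals, and its non-vanishing forces each local factor --- including the one at $v_0$ --- to be non-zero, yielding the desired $(H_{r,2k}, \psi_r)$-distinction of $\speh(\delta, m)$.

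The final step, and the main obstacle, is heredity: if each $\pi_i$ is $(H_{r_i, 2k_i}, \psi_{r_i})$-distinguished, then $\pi_1 \times \cdots \times \pi_s$ is $(H_{R, 2K}, \psi_R)$-distinguished with $R = \sum_i r_i$ and $2K = \sum_i 2k_i$. I would approach this via the Bernstein--Zelevinsky geometric lemma applied to the standard parabolic $P$ and the subgroup $H_{R, 2K}$: the restriction to $H_{R, 2K}$ of the induced representation carries a filtration indexed by $P$-orbits on $G/H_{R,2K}$. The goal is to pinpoint an open orbit whose stabilizer is conjugate to a product of the $H_{r_i, 2k_i}$ and on which the induced character restricts to $\bigotimes_i \psi_{r_i}$, so that the block functionals assemble into a functional on the corresponding graded piece. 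Extending that functional to the whole induced representation --- equivalently, controlling the contributions from non-open orbits and placing the desired orbit at the top of the filtration --- is the delicate technical point, and should be carried out using vanishing arguments that exploit the structure of the unitary $\pi_i$ through Zelevinsky derivatives. Once heredity is established, the resulting data satisfy $R + 2K = n$, and $\pi$ is $(H_{R, 2K}, \psi_R)$-distinguished, proving Theorem \ref{localmainintro}.
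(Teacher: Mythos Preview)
Your strategy diverges from the paper's, and the heredity step (your step~3) is where the argument is genuinely incomplete.

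The paper never proves a heredity statement of the form ``$\pi_i$ is $(H_{r_i,2k_i},\psi_{r_i})$-distinguished $\Rightarrow$ $\pi_1\times\cdots\times\pi_s$ is $(H_{R,2K},\psi_R)$-distinguished.'' Instead it works with the full product at once and uses Bernstein--Zelevinsky derivatives. The key identity is \eqref{eq: derivative functionals}: for any subgroup $Y\subset G_{n-r}$,
\[
\Hom_{H_{Y,r}}(\pi,\psi_r)=\Hom_{Y}(\pi^{(r)},1),
\]
so $(H'_{2k,r},\psi'_r)$-distinction of $\pi$ is the same as $Sp(2k)$-distinction of $\pi^{(r)}$. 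Writing $\pi=\sigma_1\times\sigma_2$ with $\sigma_1$ the product of the even Speh factors and $\sigma_2$ the product of the odd ones, Lemma~\ref{lemma: der onto} gives a surjection $\pi^{(r)}\twoheadrightarrow \sigma_1\times\sigma_2^{(r)}$, and Lemma~\ref{lemma: speh prod der} shows $\sigma_2^{(r)}$ is a product of \emph{even} (twisted) Speh representations. One then quotes \cite[Proposition~2]{mix} for the symplectic model of a product of even twisted Speh representations, and composes. No orbit analysis on $P\backslash G/H_{R,2K}$ is needed.

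Your proposed heredity argument has two soft spots. First, the orbit claim is not established: for a general parabolic $P$ and $H_{R,2K}$, it is not clear that there is a $P$--$H_{R,2K}$ double coset whose stabilizer is (conjugate to) $\prod_i H_{r_i,2k_i}$ with the characters matching; the $Sp(2K)$-block does not split along an arbitrary block decomposition, so this requires a careful choice of representative and a genuine computation. Second, even granting the orbit, your extension step is backwards: in the Mackey filtration of $\mathrm{Ind}_P^G(\otimes\pi_i)|_{H_{R,2K}}$, \emph{open} orbits contribute subrepresentations (extension by zero), while \emph{closed} orbits contribute quotients. A functional on a subrepresentation does not automatically extend $H$-equivariantly to the whole, so ``open orbit'' is the wrong target; one would need the relevant orbit to sit at the quotient end, or else prove vanishing of $\Hom_{H_{R,2K}}(\,\cdot\,,\psi_R)$ on all the other graded pieces, which you have not done.

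Your step~2 (global-to-local for a single Speh via Theorem~\ref{globalmainintro} and Corollary~\ref{factorizable}) is reasonable, but note that once you unpack the factorization through \eqref{periodofres}, the local functional you obtain is precisely the composition (local Whittaker on $\sigma_v$) $\otimes$ (local symplectic on $U(\sigma_v,2m)$) with a local Jacquet/intertwining map --- i.e.\ exactly the derivative construction the paper uses. So this detour gains nothing over citing \cite{mix} directly, and it does not help with the product case, which is where the real work lies.
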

As in the global case, following our main local result, Theorem
\ref{thm: main local}, we construct a map $\pi \mapsto
\kappa(\pi)$ that assigns (in particular) to any irreducible,
unitary representation an integer $k=\kappa(\pi)$ for which
Theorem \ref{localmainintro} holds. This map is described
explicitly in \S \ref{local} in terms of Tadic's classification of
the unitary dual of $G(F)$ obtained in \cite{MR870688}. Our proof
is local. It is based, however, on the hereditary property of
Whittaker models with respect to parabolic induction and on our
results on purely symplectic models in \cite{mix}. We remark that
our proof in \cite{mix} uses a global argument. Thus, our entire
proof is based on the global theory of automorphic forms. It will
be interesting to see a purely local proof of Theorem
\ref{localmainintro}.
\subsection{Some background on the study of mixed periods}
Theorem \ref{localmainintro} can be interpreted as an existence
statement of certain mixed models. For a decomposition $n=r+2k$ we
introduce the Klyachko model
\[
\mix_{r,2k}=\mbox{Ind}_{H_{r,2k}(F)}^{G(F)}(\psi_{r}).
\]
Note that $\mix_{n,0}$ is the Whittaker model while when $n$ is even
$\mix_{0,n}$ is a purely symplectic model. By Frobenious
reciprocity, for any admissible representation $\pi$ of $G(F)$ we
have
\[
    \Hom_{G(F)}(\pi,\mix_{r,2k})=\Hom_{H_{r,2k}(F)}(\pi,\psi_r).
\]
Thus, if $\pi$ is irreducible then it is
$(H_{r,2k},\psi_r)-$distinguished if and only if it can be realized
in the space $\mix_{r,2k}$. The Klyachko model $\mix_{r,2k}$ is a
mixed (Whittaker-symplectic) model for $G(F)$ and whenever $\pi$ is
$(H_{r,2k},\psi_r)-$distinguished we say that $\pi$ admits the model
$\mix_{r,2k}$.

The models $\mix_{r,2k}$ were first considered by Klyachko in
\cite{kly} in the case where $F$ is a finite field. If $F=\ff_q$ is
the field of $q$ elements, the work of Klyachko suggests that
\[
\mix=\oplus_{k=0}^{[\frac
    n2]}\mix_{n-2k,2k}
\]
is a Gelfand model for $G(\ff_q)$, i.e. it is the direct sum of all
irreducible representations of $G(\ff_q)$ each appearing with
multiplicity one. In other words, for any irreducible representation
$\pi$ of $G(\ff_q)$ we have $m_\pi=1$ where $m_\pi$ is defined by
\[
    m_\pi=\dim_\cc(\Hom_{G(F)}(\pi,\mix)).
\]
As already pointed out by Inglis and Saxl the proof of Klyachko
contains several inaccuracies and gaps and is therefore incomplete.
In \cite{MR1129515} a complete proof is given, using different
methods. The result has applications to the representation theory of
$G(\ff_q)$ and was used for example in \cite{fulman-2003} and in
\cite{MR2206367}. In \cite{thiem-2005}, an analog is proved for the
finite unitary group.

The fact that $m_\pi=1$ for an irreducible representation $\pi$
consists of the following 3 properties: \emph{existence} ($\pi$
admits some Klyachko model), \emph{disjointness} ($\pi$ admits at
most one Klyachko model) and \emph{uniqueness} ($\pi$ imbeds into a
given Klyachko model with at most multiplicity one). Klyachko models
over a $p$-adic field were first studied by Heumos and Rallis in
\cite{MR91k:22036}. They observed that, already when $n=3$ there
exists an irreducible, admissible representation $\pi$ of $G(F)$
that admits no Klyachko model, i.e. such that $m_\pi=0$. However,
when $n \le 4$ they showed that every irreducible, \emph{unitary}
representation $\pi$ of $G(F)$ admits a Klyachko model, i.e. that
$m_\pi \ge 1.$ In general, they also showed the uniqueness of the
purely symplectic model \cite[Theorem 2.4.2]{MR91k:22036}, i.e. that
if $n$ is even and $\pi$ is an irreducible admissible representation
of $G(F)$ then
\begin{equation}\label{eq: symp uniq}
    \dim_\cc(\Hom_{G(F)}(\pi,\mix_{0,n}))\le 1.
\end{equation}
Another result claimed in \cite[Theorem 3.1]{MR91k:22036} is
disjointness of Klyachko models for irreducible, unitary
representations. Unfortunately, the proof is based on
\cite[Proposition 1.3]{kly} which is false. To be more precise, the
proof given in \cite[\S 3]{MR91k:22036} could have been based on the
statement in \cite[\S 1.1]{kly}, which is a weaker statement then
\cite[Proposition 1.3]{kly} and which may still be true but, to our
knowledge, has not yet been proved. We will obtain the disjointness
and uniquness of Klyachko models over a $p$-adic field in an
upcoming paper. The local part of the current paper treats the
existence of Klyachko models.

In \cite{mix}, we provided a family of unitary representations of
$G(F)$ that admit a purely symplectic model. From Theorem
\ref{localmainintro} we get that $m_\pi \ge 1$ for any irreducible
unitary representation $\pi$ of $G(F)$. We also promised in
\cite{mix} that the current work will characterize, in particular,
all irreducible unitary representations admitting a symplectic
model. This was based on the unitary disjointness that, we only
later observed, remains unproved. We will therefore only deliver our
promise in our upcoming paper when we prove disjointness of the
Klyachko models. It will also be interesting to study the
analogous problem in the archimedean case, and the global mixed
periods for the continuous automorphic spectrum of $G$. Langlands,
described the continuous spectrum in terms of discrete spectrum
datum. Using this description, we believe that a global analogue
of Theorem \ref{localmainintro}, properly formulated, should be
true and we also hope to address this problem in the future. Our
study of the global mixed periods was motivated by its analogy
with the local problem. This analogy, was already suggested by
Heumos in his survey paper on the subject \cite{{MR1216185}}.

\begin{remark}
The focus of this paper is on non vanishing of periods. Thus, the
way Haar measures are normalized plays no role in the proofs of the
main results. We therefore do not choose a specific normalization
for the Haar measures appearing in this work and (with the exception
of \S \ref{Lfunc}) will make no further comments regarding the
choice of measures.
\end{remark}

\subsection{Acknowledgement}
We are grateful to Joseph Bernstein for his help and patience. His
insistence for conceptual arguments significantly simplified this
work.

\section{Global mixed periods}\label{global}

M{\oe}glin and Waldspurger classified the discrete spectrum
automorphic representations of $G(\A)$ in \cite{MR91b:22028}. We
recall their result.

Let $P=MU$ be a standard parabolic subgroup of $G$ with Levi subgroup $M$ and unipotent radical $U$.
If $P$ is of type $(n_1,\dots,n_t)$ then for an automorphic representation
$\tau=\tau_1 \otimes \cdots \otimes \tau_t$ of $M(\A)$ and for
$\lambda=(\lambda_1,\dots,\lambda_t) \in \cc^t$ let $\tau[\lambda]$
be the representation on the space of $\tau$ defined by
\[
    \tau[\lambda]=\abs{\det }^{\lambda_1}\tau_1 \otimes \cdots
    \otimes \abs{\det }^{\lambda_t}\tau_t
\]
and let $I(\tau,\lambda)=I_P^G(\tau,\lambda)$ be the representation
of $G(\A)$ parabolically induced from $\tau[\lambda]$. For a
positive integer  $t$ let
\begin{equation}\label{lambda}
    \Lambda_t=(\frac{t-1}2,\frac{t-3}2,\dots,\frac{1-t}2).
\end{equation}
Fix a character $\xi$ of $F^\times \bs \A^\times$ and identify $Z_G(\A)$ with $\A^\times$.
Let $rt=n$ and let $\sigma$ be an irreducible,
cuspidal automorphic representation of $G_r(\A)$. Assume that $P$ is
of type $(r,\dots,r)$ and let $\tau=\sigma^{\otimes t}$. The
representation $I(\tau,\Lambda_t)$ has a unique irreducible
quotient, which we denote by $L(\sigma,t)$. Note that the central character of
$I(\tau,\Lambda_t)$ and therefore also of  $L(\sigma,t)$ is
$\omega_\sigma^t$ where $\omega_\sigma$ is the central character of
$\sigma$.
\begin{theorem}[M{\oe}glin-Waldspurger]\label{MW}
Let $n=rt$ and let $\sigma$ be an irreducible, cuspidal automorphic
representation of $G_r(\A)$ so that $\omega_\sigma^t=\xi$. The
representation $L(\sigma,t)$ occurs in $L_{disc}^2(G,\xi)$ with
multiplicity one and every irreducible component of
$L_{disc}^2(G,\xi)$ is of the form $L(\sigma,t)$ for such a pair
$(\sigma,t).$
\end{theorem}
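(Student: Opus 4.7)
The plan is to follow Langlands' residue calculus for Eisenstein series, combined with the analytic properties of Rankin--Selberg $L$-functions from Jacquet--Shalika. By Langlands' spectral decomposition, every summand of $L^2_{disc}(G,\xi)$ orthogonal to the cuspidal spectrum arises as an iterated residue of an Eisenstein series built from some cuspidal datum $(P',\tau')$, with $P'$ standard of type $(n_1,\dots,n_s)$ and $\tau'$ an irreducible cuspidal automorphic representation of the Levi $M_{P'}(\A)$. The theorem has thus two complementary halves: constructing $L(\sigma,t)$ as such an iterated residue (existence), and showing that no other cuspidal data produce anything new (exhaustion plus multiplicity one).

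For the existence half, I would take $P$ of type $(r,\dots,r)$, $\tau=\sigma^{\otimes t}$, and analyze the Eisenstein series $E(g,\phi,\lambda)$ attached to $I(\tau,\lambda)$. Langlands' constant term formula writes this in terms of normalized intertwining operators $N(w,\lambda)$, whose singular hyperplanes are governed by ratios of the form $L(\lambda_i-\lambda_j,\sigma\times\tilde\sigma)/L(\lambda_i-\lambda_j+1,\sigma\times\tilde\sigma)$. Jacquet--Shalika guarantee that $L(s,\sigma\times\tilde\sigma)$ is holomorphic for $\re(s)>1$ with a simple pole at $s=1$, so the iterated residue of $E(g,\phi,\lambda)$ taken along the chain of hyperplanes $\lambda_i-\lambda_{i+1}=1$ at the point $\Lambda_t$ is non-zero and produces an automorphic realization of the Langlands quotient $L(\sigma,t)$. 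Square-integrability of this residue then follows from Langlands' growth criterion applied to its leading exponents, which all lie strictly in the negative cone by the choice of $\Lambda_t$.

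For exhaustion and multiplicity one I would combine two inputs. First, Langlands' square-integrability criterion forces any residue parameter $\lambda_0$ that contributes to $L^2_{disc}(G,\xi)$ to satisfy strong positivity constraints on its real part, ruling out most singular configurations coming from a general cuspidal datum. Second, rigidity of cuspidal support together with strong multiplicity one for cuspidal representations of $GL_n$ implies that if two pairs $(\sigma,t)$ and $(\sigma',t')$ yield isomorphic residues, they must already coincide. Putting these together, one concludes that the only cuspidal data producing iterated residues inside the $L^2$ cone are the Speh-type data $(GL_r^t,\sigma^{\otimes t})$ at $\lambda_0=\Lambda_t$, and that each such datum contributes exactly one irreducible summand.

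The main obstacle will be the exhaustion step: ruling out square-integrable iterated residues that come from data $\tau_1\otimes\cdots\otimes\tau_s$ in which the $\tau_i$ are not all mutually isomorphic cuspidal representations of a common rank. This requires a careful combinatorial study of the poles of all the relevant intertwining operators under the Weyl group action, and of how these intersect the square-integrability cone. M{\oe}glin--Waldspurger handle this via an intricate induction on $n$ using residue chains and a precise analysis of which Weyl elements can yield a non-trivial contribution; any self-contained reproduction of the proof must revisit this combinatorial core.
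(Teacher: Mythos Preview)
The paper does not prove this theorem at all: it is quoted as a known result of M{\oe}glin and Waldspurger \cite{MR91b:22028}, introduced with the words ``We recall their result,'' and no argument is supplied. So there is nothing to compare your proposal against within this paper; your outline is a sketch of the original M{\oe}glin--Waldspurger argument, not of anything the authors do here.

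That said, as a sketch of the actual M{\oe}glin--Waldspurger proof your proposal is broadly accurate in spirit. The existence half via iterated residues at $\Lambda_t$, using Jacquet--Shalika for the location of poles of $L(s,\sigma\times\tilde\sigma)$ and Langlands' square-integrability criterion on exponents, is indeed how the Speh residues are produced. You are also right that the hard part is exhaustion, and that this is where the real work lies: one must show that for a general cuspidal datum $\tau_1\otimes\cdots\otimes\tau_s$ no residual configuration other than the equal-block Speh one lands in the $L^2$ cone. Your description of this step is honest about the difficulty but does not actually carry it out; a genuine proof would need the combinatorial analysis of singular hyperplanes and Weyl orbits that occupies most of \cite{MR91b:22028}. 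If your intent was only to indicate the strategy and cite \cite{MR91b:22028} for the details, that is exactly what the present paper does, and you should simply say so rather than present it as a proof.
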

We now turn to the proof of Theorem \ref{globalintro}. Let
$\pi=L(\sigma,t)$ be an irreducible component of
$L_{disc}^2(G,\xi)$. If $\pi$ is cuspidal (i.e. if $t=1$), it is
well known that the Whittaker functional $l_{U_n}^{\psi_n}$ is not
identically zero on $\pi$, in other words $\pi$ is
$(U_n,\psi_n)-$distinguished. On the other hand, in \cite[Theorem
3]{Odist} we show that $\pi$ is $Sp(n)-$distinguished if and only if
$t$ is even. Theorem \ref{globalmainintro} therefore follows from
\begin{theorem}\label{globalmain}
Let $n=(2m+1)r$ and let $\sigma$ be an irreducible, cuspidal
automorphic representation of $G_r(\A)$. The representation
$L(\sigma,2m+1)$ is $(H_{r,2mr},\psi_r)-$distinguished.
\end{theorem}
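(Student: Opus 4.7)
The plan is to realize $L(\sigma, 2m+1)$ as an iterated residue at $\lambda = \Lambda_{2m+1}$ of the Eisenstein series $E(\phi, \lambda)$ attached to a section $\phi$ of $I_P^G(\tau, \lambda)$, where $P$ is the standard parabolic of type $(r, r, \dots, r)$ with $2m+1$ blocks and $\tau = \sigma^{\otimes(2m+1)}$. By Theorem \ref{MW}, every automorphic form in the space of $L(\sigma, 2m+1)$ arises this way. Thus to prove $(H_{r,2mr}, \psi_r)$-distinction it suffices to exhibit a $\phi$ whose multi-residue has non-vanishing mixed period. The route is the unfolding identity \eqref{periodofres} advertised in the introduction: one substitutes the defining sum $E(\phi, \lambda)(g) = \sum_{\gamma \in P(F)\bs G(F)} \phi(\gamma g, \lambda)$ into $l_{H_{r,2mr}}^{\psi_r}$ and reorganizes the result along the double cosets $P(F)\bs G(F)/H_{r,2mr}(F)$.

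The heart of the argument is to isolate one distinguished double coset $P(F) \eta_0 H_{r,2mr}(F)$ whose contribution factorizes as a product of purely cuspidal periods. For a suitable representative $\eta_0$, the intersection $\eta_0^{-1} P(F) \eta_0 \cap H_{r,2mr}(F)$ arranges the unipotent radical $U_r$ on top of $H_{r,2mr}$ to coincide with the Whittaker unipotent inside one of the $G_r$-blocks of the Levi $M$, while the symplectic block $Sp(2mr)$ pairs the remaining $2m$ copies of $G_r$ into $m$ Siegel Levis of $Sp(2r)$. After exchanging integration and summation, the contribution of this coset at a generic $\lambda$ takes the shape
\[
l_{U_r}^{\psi_r}(\varphi_\sigma)\cdot \prod_{j=1}^{m} l_{Sp(2r)}\bigl(\varphi_{\sigma\otimes\sigma}^{(j)}\bigr),
\]
where the Whittaker factor is non-zero on the globally generic cuspidal $\sigma$, and each symplectic factor is non-zero by \cite[Theorem~3]{Odist} applied to $L(\sigma, 2)$. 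One then takes the multi-residue at $\Lambda_{2m+1}$; the poles are supplied by the standard intertwining operators associated to the relevant Weyl element, and a careful choice of $\phi$ ensures that the residue of the factorized expression is genuinely non-zero.

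The main obstacle is the double coset analysis: first, to pin down the geometric orbit $\eta_0$ and verify that its stabilizer has the "folded" structure producing one Whittaker block and $m$ symplectic pairs; second, to show that every other double coset either contributes zero for character-theoretic reasons (a surviving unipotent subgroup picks up a non-trivial character of $\psi_r$ which then vanishes upon integration) or gives a holomorphic term whose multi-residue at $\Lambda_{2m+1}$ is zero because it is less singular than the distinguished term. The combinatorics here is analogous to the Jacquet--Rallis unfolding for purely symplectic periods \cite{MR93b:22035} and its refinement in \cite{Odist}, but enriched by the additional Whittaker block on top, which forces an odd number of cuspidal factors and determines the natural pairing of the remaining $2m$ factors. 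Once these two technical points are established, the non-vanishing of $l_{H_{r,2mr}}^{\psi_r}$ on $L(\sigma, 2m+1)$ follows directly from \eqref{periodofres}.
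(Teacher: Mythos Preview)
Your overall strategy---realize $L(\sigma,2m+1)$ as a multi-residue of an Eisenstein series and then compute its mixed period---matches the paper's, but the route you propose to formula \eqref{periodofres} is substantially different from, and more laborious than, what the paper actually does.

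The paper does \emph{not} unfold over $P(F)\bs G(F)/H_{r,2mr}(F)$. Instead it first records the elementary identity
\[
l_{H_{r,2mr}}^{\psi_r}(\phi)=(l_{U_r}^{\psi_r}\otimes l_{Sp(2mr)})(\phi_Q[e]),
\]
where $Q=LV$ is the maximal parabolic of type $(r,2mr)$: the $H$-integral splits as an integral over $V$ (producing the constant term $\phi_Q$) followed by the $U_r\times Sp(2mr)$-integral on the Levi. The problem is thus reduced to showing that $\phi\mapsto\phi_Q[e]$ maps $L(\sigma,2m+1)$ \emph{onto} $\sigma[-m]\otimes L(\sigma,2m)[\tfrac12]$; non-vanishing of the mixed period then follows at once from genericity of $\sigma$ together with $Sp(2mr)$-distinction of $L(\sigma,2m)$ from \cite[Theorem~3]{Odist}. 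The only real computation is that of $E_{-1}(\varphi)_Q$, and this is a short Weyl-group argument over the $2m+1$ elements of ${}_LW_M^c$ (the cycles $(1,2,\dots,i)$ in $S_{2m+1}$): the multi-residue kills every term except the one for the longest cycle $w_Q$, and that surviving term is exactly $E_{-1}^Q(M_{-1}(w_Q)\varphi,\mu_Q)$.

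By contrast, your plan attacks the much larger double-coset space $P(F)\bs G(F)/H_{r,2mr}(F)$ directly and predicts a factorization into a Whittaker period times $m$ separate $Sp(2r)$-periods. That is not the shape of \eqref{periodofres}: the paper obtains a \emph{single} $Sp(2mr)$-period on the full Speh block $L(\sigma,2m)$, not a product of small ones. Your product would only appear after a further unfolding of $l_{Sp(2mr)}$ on $L(\sigma,2m)$ in the style of \cite{OII,Odist}, so you have collapsed two distinct unfoldings into one step. The paper's two-stage reduction (take the constant term along $Q$, then invoke known results on each tensor factor) bypasses all of the geometric difficulties you flag as ``the main obstacle'': it never needs to locate a distinguished orbit $\eta_0$, compute its stabilizer, or rule out the remaining $P\bs G/H$ cosets.
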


The proof of Theorem \ref{globalmain} is given in \S\ref{global
proof}. We start with some notation and a summary of necessary
facts.

\subsection{Eisenstein series, intertwining operators and multi-residues}\label{Eis}

When we say that $P=MU$ is a standard parabolic subgroup of $G$ with
its standard Levi decomposition, we mean that $M$ is the standard
Levi subgroup and $U$ is the unipotent radical of $P$. Throughout,
$P=MU$ and $Q=LV$ will denote standard parabolic subgroups of $G$
with their standard Levi decompositions so that $P$ is contained in
$Q$.

For integers $a \le b$ let $[a,b]=\{a,a+1,\dots,b\}$. We identify
the index set $\Delta=[1,n-1]$ with the set of simple roots of $G$
with respect to the standard Borel subgroup of upper triangular
matrices in $G$ and let $\Delta^M$ denote the set of those indices
$i$ that correspond to roots in $M$. If $P$ is of type
$(n_1,\dots,n_t)$ then
\[
    \Delta^M=\Delta \setminus
    \{n_1,n_1+n_2,\dots,n_1+\cdots+n_{t-1}\}.
\]

Let $S_n$ denote the group of permutations on $[1,n]$. We denote by
$W_M$ the Weyl group of $M$ and let $W=W_G$. We identify $W$ with
$S_n$ and when convenient we consider an element of $S_n$ as a
permutation matrix in $G$. Each double coset in $W_{L} \bs W/W_{M}$
contains a unique representative of minimal length - the left $W_{L}$
and right $W_{M}$ reduced representative. We denote by ${}_{L}
W_{M}$ the set of reduced representatives of the double cosets and
let
\[
    {}_{L}
W_{M}^c=\{w \in {}_{L} W_{M}: wMw^{-1} \subset L\}.
\]
If $n=tr$ and $P$ is of type $(r,\dots,r)$ the set ${}_M W_M^c$
consists of the Weyl elements permuting the blocks of $M$ and we
identify ${}_M W_M^c$ with $S_t$.

For an algebraic group $Y$ defined over $F$ let $X^*(Y)$ be the
lattice of rational characters of $Y$, let $\aaa_Y^*=X^*(Y) \otimes
\rr$ and let $\aaa_Y$ be the dual vector space. Denote by
$\sprod{\cdot}{\cdot}$ the pairing between $\aaa_Y$ and $\aaa_Y^*$.
If $P$ is of type $(n_1,\dots,n_t)$ then we identify $\aaa_M$ and
its dual with $\rr^t$. The pairing between $\aaa_M$ and $\aaa_M^*$
is then the standard inner product on $\rr^t$. There is a natural
embedding of $\aaa_L$ into $\aaa_M$ with an orthogonal decomposition
\[
    \aaa_M=\aaa_L \oplus \aaa_M^L.
\]
Denote by $X \mapsto X_L$ the orthogonal projection from $\aaa_M$ to
$\aaa_L$. We use similar notation for the corresponding
decomposition in the dual space and denote by $\lambda \mapsto
\lambda_L$ the orthogonal projection of $\aaa_M^*$ to $\aaa_L^*$.
Using the Iwasawa decomposition $G(\A)=U(\A)M(\A)K$, where $K$ is
the standard maximal compact subgroup of $G(\A)$, we define the
height function $H_M:G(\A) \to \aaa_M$ by the requirement that for
every $\chi \in X^*(M)$ we have
\[
    e^{\sprod{\chi}{H(umk)}}=\prod_v \abs{\chi_v(m_v)}_v.
\]
The map $H_M$ induces an isomorphism $M(\A)/M(\A)^1 \to \aaa_M$. Let
$\rho_P=(\Lambda_n)_M \in (\aaa_M)^*$ where $\Lambda_n$ is given by
\eqref{lambda}. Thus,
\[
p \mapsto e^{\sprod{2\rho_P}{H_M(p)}}
\]
is the modulus function of $P (\A)$.

Let $\tau$ be an irreducible, cuspidal
automorphic representation of $M(\A)$. We identify the
representation spaces of $I_{P}^G(\tau,\lambda)$ for all
$\lambda \in \aaa_{M,\cc}^*$ with the representation space $I_P^G(\tau)$ with $\lambda=0$. In
particular, we have
\[
    \varphi(pg)=e^{\sprod{\rho_P}{H_M(p)}}\tau(m) \varphi(g)
\]
whenever $\varphi \in I_P^G(\tau),\,p=mu \in P(\A),\,m \in M(\A),\,u \in U(\A)$ and $g \in G(\A)$.
For $\varphi
\in I_P^G(\tau)$ and $\lambda \in \aaa_{M,\cc}^*$ we denote
by $\varphi_\lambda$ the standard holomorphic section given by
\[
    \varphi_\lambda(g)=\varphi(g)e^{\sprod{\lambda}{H_M(g)}}.
\]
The action of the representation $I_P^G(\tau,\lambda)$ on
the space $I_P^G(\tau)$ is then given by
\[
    (I_P^G(g,\tau,\lambda)\varphi)_\lambda (x)=\varphi_\lambda(xg)
\]
for $g,\,x \in G(\A)$.

Let $\lambda \in \mathfrak{a}_{M,\bbc}^*$ and let $\varphi \in
I_P^G(\tau)$. The Eisenstein series $E^Q(\varphi,\lambda)$ is defined as
the meromorphic continuation of the series
\[
    E^Q(g,\varphi,\lambda)=\sum_{\gamma \in (P\cap L)(F) \bs
    L(F)}\varphi_\lambda(\gamma g).
\]
When $Q=G$ we also set $E(\varphi,\lambda)=E^Q(\varphi,\lambda)$.

Assume now that $n=tr$, that $P$ is of type $(r,\dots,r)$, that $Q$
is of type $(m_1 r,\dots,m_s r)$, that $\sigma$ is an irreducible,
cuspidal automorphic representation of $G_r(\A)$ and that
$\tau=\sigma^{\otimes t}$ is the corresponding cuspidal
representation of $M(\A)$. For any $w \in {}_MW_M^c=S_{t}$ we let
$M(w,\lambda):I(\tau,\lambda) \to I(\tau,w\lambda)$ be the standard
(non normalized) intertwining operator. It is defined by the
meromorphic continuation of the integral
\begin{equation}\label{intop}
    (M(w,\lambda)\varphi)_{w\lambda}(g)=\int_{(U \cap wUw^{-1})(\A)\bs
    U(\A)}\varphi_{\lambda}(w^{-1}ug)du.
\end{equation}
Its domain of convergence includes that of the Eisenstein series and
it admits a meromorphic continuation. Let
\[
\Lambda^Q=(\Lambda_{m_1},\dots,\Lambda_{m_s}) \in (\aaa_M^L)^*
\]
and let $\mu \in \aaa_L^*$.
For $\varphi \in I_P^G(\tau)$ the expression
\[
E^Q(\varphi,\lambda+\mu)\prod_{i \in
\Delta^L}(\lambda_i-\lambda_{i+1}-1)
\]
is holomorphic at $\lambda=\Lambda^Q$. We may therefore define the
multi-residue of the Eisenstein series by
\[
    E^Q_{-1}(\varphi,\mu)=\lim_{\lambda \to \Lambda^Q}E^L(\varphi,\lambda+\mu)\prod_{i \in
    \Delta^L}(\lambda_i-\lambda_{i+1}-1).
\]
It defines a surjective intertwining operator
\[
    E_{-1}^Q(\mu):I_{P}^G(\tau,\Lambda^Q+\mu) \to I_Q^G(L(\sigma,m_1)
    \otimes \cdots \otimes L(\sigma,m_t),\mu).
\]
When $Q=G$ we also denote the multi-residue operator by
$E_{-1}=E_{-1}^G(0)$. It is then a surjective intertwining operator
from $I(\tau,\Lambda_t)$ to $L(\sigma,t)$ that realizes the
representation $L(\sigma,t)$ in the space $L^2_{disc}(G,\xi)$ of
automorphic forms. We also consider the multi-residue of the
intertwining operator $M(w,\lambda)$. Let
\[
    \Delta(w)=\{i \in [1,t-1]: w(i)>w(i+1)\}
\]
and set
\begin{equation}\label{res int op}
    M_{-1}(w)=\lim_{\lambda \to \Lambda_t} M(w,\lambda)\prod_{i
    \in \Delta(w)}(\lambda_i-\lambda_{i+1}+1).
\end{equation}
It is an intertwining operator from $I(\tau,\Lambda_t)$ to
$I(\tau,w\Lambda_t) $.
For an automorphic form $\phi$ on $G(\A)$ we define its constant term along $Q$
by
\[
        \phi_Q(g)=\int_{V(F) \bs V(\A)} \phi(vg)dv.
\]
The function $\ell \mapsto \phi_Q(\ell)$ is an automorphic form on $L(\A)$.
We denote it by $\phi_Q[e]$.
\subsection{Proof of Theorem \ref{globalmain}}\label{global proof}

Fix a decomposition $n=(2m+1)r$. Let $H=H_{r,2mr}$, let $P$ be of
type $(r,\dots,r)$ and let $Q=LV$ be of type $(r,2mr)$ (i.e. $Q$ is
the standard maximal parabolic subgroup of $G$ containing $H$). Note
that
\[
H \simeq (U_r \times Sp(2mr))V \text{ and } L \simeq G_r \times
G_{2mr}.
\]
If $\phi$ is an automorphic form in the discrete spectrum then it is
easy to see that
\begin{equation}\label{period const term}
    l_H^{\psi_r}(\phi)=(l_{U_r}^{\psi_r} \otimes l_{Sp(2mr)})  (\phi_Q[e]).
\end{equation}
Let $\sigma$ be a cuspidal automorphic representation of $G_r(\A)$,
let $\tau=\sigma^{\otimes 2m+1}$ and let $\pi=L(\sigma,2m+1)$. Our
goal is to show that the mixed period $l_H^{\psi_r}$ is not
identically zero on $\pi$.
As already explained, it is known that
$l_{U_r}^{\psi_r} \otimes l_{Sp(2mr)}$ is not zero on
$\sigma[-m] \otimes  L(\sigma,2m)[\frac12]$.
From \eqref{period const term} we see that it is enough to show the following.
\begin{proposition}\label{prop: global onto}
The map $\phi \mapsto \phi_Q[e]$ defines a surjection from $\pi$
to $\sigma[-m] \otimes  L(\sigma,2m)[\frac12].$
\end{proposition}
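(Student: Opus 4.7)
The plan is to compute $\phi_Q[e]$ for $\phi = E_{-1}(\varphi, 0) \in \pi$ by commuting the constant-term integral along $Q$ with the multi-residue defining $E_{-1}$, so that
\[
\phi_Q[e](\ell) = \lim_{\lambda \to \Lambda_{2m+1}} \left(\prod_{i=1}^{2m}(\lambda_i - \lambda_{i+1} - 1)\right) (E(\varphi,\lambda))_Q(\ell), \qquad \ell \in L(\A),
\]
and then to expand $(E(\varphi,\lambda))_Q$ via the Langlands constant-term formula. Since $\sigma$ is cuspidal, only $w \in {}_L W_M^c$ contribute, yielding a sum of $2m+1$ summands indexed by permutations $w_j$ ($1 \le j \le 2m+1$): the element $w_j$ places the $j$-th block of $M$ into the $G_r$-factor of $L = G_r \times G_{2mr}$ and preserves the order of the remaining $2m$ blocks in the $G_{2mr}$-factor. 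Each summand is a cuspidally induced Eisenstein series on $L$ at parameter $w_j \lambda$, applied to $M(w_j, \lambda)\varphi$.

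The main step is to identify the unique surviving summand after the multi-residue, which I claim to be the one indexed by $w = w_{2m+1}$. For this permutation the shifted parameter $w_{2m+1}\Lambda_{2m+1} = (-m;\, m, m-1, \ldots, -(m-1))$ puts $\sigma[-m]$ on the $G_r$-factor and, on the $G_{2mr}$-factor, the complete arithmetic chain $\Lambda_{2m} + (\tfrac12, \ldots, \tfrac12)$. The $2m-1$ residues on the $G_{2mr}$-factor thus realize precisely the multi-residue operator $E_{-1}^{G_{2mr}}$ shifted by $\tfrac12$, which by the construction recalled in \S\ref{Eis} is a surjection onto $L(\sigma, 2m)[\tfrac12]$; the single remaining residue is absorbed by the pole that $M(w_{2m+1}, \lambda)$ carries along the hyperplane $\lambda_1 = \lambda_2 + 1$. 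Combined with the elementary surjectivity on the $G_r$-factor (where the target is just $\sigma[-m]$), this produces a surjection $\varphi \mapsto \phi_Q[e]$ from $I(\tau)$ onto $\sigma[-m] \otimes L(\sigma, 2m)[\tfrac12]$.

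The main obstacle is to show that the other $2m$ summands do not contribute to the multi-residue; this is a pole-order accounting along the codimension-$2m$ residue locus $\bigcap_{i=1}^{2m}\{\lambda_i = \lambda_{i+1} + 1\}$. For $1 < j < 2m+1$ the $G_{2mr}$-parameter chain $(\lambda_1, \ldots, \widehat{\lambda_j}, \ldots, \lambda_{2m+1})$ skips $\lambda_j$, breaking the arithmetic progression and reducing the number of simple poles that the residual $G_{2mr}$-Eisenstein series can supply along the required hyperplanes. For $j = 1$ the $G_{2mr}$-chain is intact but $M(w_1, \lambda) = \mathrm{id}$ contributes nothing, so the available poles again fail to saturate the $2m$ factors of the residue. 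A careful matching of the poles of $M(w_j, \lambda)$ (which lie on hyperplanes $\lambda_i - \lambda_k = 1$ for inversions of $w_j$) against the poles of the $G_{2mr}$-Eisenstein series at $w_j\lambda$ shows that for every $j < 2m+1$ the function $E^L(M(w_j, \lambda)\varphi, w_j\lambda)$ becomes regular on the residue locus after multiplication by $\prod_{i=1}^{2m}(\lambda_i - \lambda_{i+1} - 1)$, killing the summand. This bookkeeping — together with the identification of the leading behavior in the $w_{2m+1}$-summand — is the technical heart of the argument.
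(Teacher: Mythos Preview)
Your approach is the same as the paper's: compute $E_{-1}(\varphi)_Q$ by interchanging the multi-residue with the Langlands constant-term expansion along $Q$, and then show by pole-counting that among the $2m+1$ Weyl elements in ${}_LW_M^c$ only $w_Q=w_{2m+1}$ survives. One bookkeeping slip: the simple pole of $M(w_{2m+1},\lambda)$ that supplies the last residue lies on $\lambda_{2m}=\lambda_{2m+1}+1$, not on $\lambda_1=\lambda_2+1$. Indeed $\Delta(w_{2m+1})=\{2m\}$, while the $G_{2mr}$-Eisenstein series (with parameter $(\lambda_1,\dots,\lambda_{2m})$ after applying $w_{2m+1}$) already carries the $2m-1$ poles along $\lambda_i=\lambda_{i+1}+1$ for $1\le i\le 2m-1$; it is the hyperplane $i=2m$ that $M(w_Q,\lambda)$ must cover.

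There is, however, a genuine gap in your surjectivity step. The constant-term identity yields
\[
\varphi \ \longmapsto\ E_{-1}(\varphi)_Q[e]\;=\;E_{-1}^Q\bigl(M_{-1}(w_Q)\varphi,\mu_Q\bigr)[e],
\]
and you invoke the surjectivity of $E_{-1}^Q(\cdot,\mu_Q)$; but this does not make the composite surjective unless you also control the image of the residual intertwiner $M_{-1}(w_Q)\colon I(\tau,\Lambda_{2m+1})\to I(\tau,w_Q\Lambda_{2m+1})$, which is generally \emph{not} onto and which you do not address. The paper closes this differently: it notes that the target $\sigma[-m]\otimes L(\sigma,2m)[\tfrac12]$ is irreducible, so it suffices to know (i) the map $\phi\mapsto\phi_Q[e]$ is not identically zero on $\pi$ and (ii) its image lands in the target. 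Your computation gives (ii). For (i) the paper uses that $\phi\mapsto\phi_P$ is injective on $\pi$ (an embedding into $I(\tau,-\Lambda_{2m+1})$) together with $\phi_P(g)=\int_{(U\cap L)(F)\backslash (U\cap L)(\A)}\phi_Q(ug)\,du$, so $\phi_Q$ cannot vanish identically. Adding this irreducibility-plus-nonvanishing argument is what completes the proof.
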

\begin{proof}
Note first that $\sigma[-m] \otimes  L(\sigma,2m)[\frac12]$ is irreducible
and therefore it is enough to prove that the map $\phi \mapsto \phi_Q[e]$ is not identically
zero on $\pi$ and that its image indeed lies in
\[
\sigma[-m] \otimes  L(\sigma,2m)[\frac12].
\]
Since $\phi_Q(\ell g)$ is the value at $\ell$ of $(\pi(g)\phi)_Q[e]$ for $\ell \in L(\A),\,g \in G(\A)$, to prove that the map is not zero it is enough to show that the constant term map $\phi \mapsto \phi_Q$ is not identically zero on $\pi$. But it is well known (e.g. \cite{MR85k:22045}) that $\phi \mapsto \phi_P$ is not identically zero (in fact $\phi \mapsto \phi_P$ defines an imbedding of $L(\sigma,2m+1)$ in $I(\tau,-\Lambda_{2m+1})$) and we  have
\[
    \phi_P(g)=\int_{(U \cap L)(F) \bs (U\cap L)(\A)}\phi_Q(ug)\ du.
\]
It therefore only remains to show that $\phi_Q[e]$ lies in the space of the automorphic representation
$\sigma[-m] \otimes  L(\sigma,2m)[\frac12]$ of $L(\A).$
To see this we use the automorphic realization of $L(\sigma,2m+1)$ and compute
the constant term of multi-residues of Eisenstein series.
Denote by $w_Q$ the longest element in ${}_L W_M.$ Thus, $w_Q \in
{}_M W_M^c$ and as a permutation in $S_{2m+1}$ it is the cycle
$(1,2,\dots,2m+1)$.
Let
\[
\mu_Q=w_Q\Lambda_{2m+1}-\Lambda^Q=(-m,\frac12) \in \aaa_L^*.
\]
The formula for the constant term that we obtain in Lemma \ref{const
term}, implies that $E_{-1}(\varphi)_Q$ lies in the image of the
operator
\[
E_{-1}^Q(\mu_Q):I(\tau,\Lambda_{2m+1}) \to I_Q^G(\sigma \otimes L(\sigma,2m),\mu_Q)
\]
for $E_{-1}(\varphi)$ in the space of $\pi$
and therefore that $E_{-1}(\varphi)_Q[e]$ lies in $\sigma[-m] \otimes L(\sigma,2m)[\frac12]$. It remains only to compute the constant term.
\begin{lemma} \label{const term}
For every $\varphi \in I(\tau)$ we have
\[
    E_{-1}(\varphi)_Q=
    E_{-1}^Q ((M_{-1}(w_Q) \varphi),\mu_Q).
\]
\end{lemma}
\begin{proof}
For $\varphi \in I(\tau)$ the constant term of the Eisenstein series
$E(\varphi,\lambda)$ is given by
\begin{equation}\label{constofeis}
    E(\varphi,\lambda)_Q=\sum_{w \in {}_L W_M^c}E^Q(M(w,\lambda)\varphi,w\lambda).
\end{equation}
The multi-residue operator $\lim_{\lambda \to \Lambda_{2m+1}}
\prod_{i=1}^{2m}(\lambda_i-\lambda_{i+1}-1) $ and the constant term
operator are interchangeable. We show that after applying the
multi-residue operator to (\ref{constofeis}), only the term
associated with $w_Q$ survives. The map $w \mapsto w^{-1}(1)$ is a
bijection from ${}_L W_M^c$ to $[1,2m+1]$. Let $w^{(i)} \in {}_L
W_M^c$ be such that $w^{(i)}(i)=1$, thus $w^{(i)}=(1,2,\dots,i)$
(and in particular $w^{(2m+1)}=w_Q$). For the term associated to the
identity element $w^{(1)}$, note that $E^Q(\varphi,\lambda)
\prod_{i=2}^{2m}(\lambda_i-\lambda_{i+1}-1)$ is holomorphic at
$\Lambda_{2m+1}$. Therefore, the contribution to (\ref{constofeis})
of the term associated to the identity Weyl element vanishes after
taking the multi-residue operator. For $i>1$ we have $
\Delta(w^{(i)})=\{i-1\}$ and therefore
$(\lambda_{i-1}-\lambda_i-1)M(w^{(i)},\lambda)$ is holomorphic at
$\Lambda_{2m+1}$. Note also that
\begin{multline*}
    \{(w^{(i)})^{-1}(j):j  \in
    [1,2m] \text{ and }(w^{(i)}\Lambda_{2m+1})_j-(w^{(i)}\Lambda_{2m+1})_{j+1}=1\}\\=[1,2m]\setminus
    \{i-1,i\}
\end{multline*}
and therefore that
$$
    E^Q(M(w^{(i)},\lambda)\varphi,w^{(i)}\lambda)\prod_{j\in
    [1,2m]\setminus \{i\}}(\lambda_j-\lambda_{j+1}-1)
$$
is holomorphic at $\Lambda_{2m+1}$. Thus if $i<2m+1$ the $w^{(i)}$
contribution to (\ref{constofeis}) vanishes after applying the
multi-residue operator. It follows that
\[
E_{-1}(\varphi)_Q=\lim_{\lambda \to
\Lambda_{2m+1}}E^Q(M(w_Q,\lambda)\varphi,w_Q\lambda)\prod_{i=1}^{2m}(\lambda_i-\lambda_{i+1}-1).
\]
The lemma follows.
\end{proof}
This completes the proof of Proposition \ref{prop: global onto}.
\end{proof}
As we already explained, this completes the proof of Theorem
\ref{globalmain}.

It follows from Lemma
\ref{const term} that we can express the mixed period as
\begin{equation}\label{periodofres}
    l_H^{\psi_r}(E_{-1}(\varphi))=(l_{U_r}^{\psi_r} \otimes l_{Sp(2mr)})
    (E_{-1}^Q (M_{-1}(w_Q \,\varphi),\mu_Q)[e])
\end{equation}
where $f[e] \in \sigma[-m] \otimes L(\sigma,2m)[\frac12]$ is the valuation at $e$ of
an element $f \in I_Q^G((\sigma \otimes L(\sigma,2m),\mu_Q).$

\begin{corollary}\label{factorizable}
The mixed period integral
$l_{H_{n-2\kappa(\pi),2\kappa(\pi)}}^{\psi_{n-2\kappa(\pi)}}$ is factorizable on
the discrete spectrum representation $\pi$.
\end{corollary}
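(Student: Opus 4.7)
The plan is to reduce factorizability of the mixed period to factorizability of its constituents using formula \eqref{periodofres}, after first disposing of the degenerate cases.

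I would begin by separating according to the parity of $t$, where $\pi = L(\sigma,t)$. When $t=1$, we have $\kappa(\pi) = 0$, so the period is simply the Whittaker functional $l_{U_n}^{\psi_n}$, which is factorizable by the standard uniqueness of Whittaker models on cuspidal representations. When $t = 2m$ is even, $\kappa(\pi) = n/2$, so $n - 2\kappa(\pi) = 0$ and the period reduces to the purely symplectic functional $l_{Sp(n)}$ on $L(\sigma,2m)$; factorizability in this case was established in \cite{Odist} via the uniqueness statement \eqref{eq: symp uniq}, which (together with multiplicity one in the discrete spectrum) forces factorizability.

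The interesting case is $t = 2m+1$ with $m \ge 1$, where I would invoke \eqref{periodofres} directly. The right hand side
\[
(l_{U_r}^{\psi_r} \otimes l_{Sp(2mr)})(E_{-1}^Q (M_{-1}(w_Q)\varphi,\mu_Q)[e])
\]
decomposes the global mixed period as the composition of four operations, each of which I would verify is factorizable. First, the intertwining operator $M(w_Q,\lambda)$ is by definition an Euler product of local intertwining integrals \eqref{intop}, and taking the multi-residue in \eqref{res int op} is a scalar (meromorphic) normalization of this Euler product, so $M_{-1}(w_Q)$ is factorizable. Second, the residue of the Eisenstein series $E_{-1}^Q$ along $Q$, followed by evaluation at $e$, is the realization of the quotient map to $I_Q^G(\sigma \otimes L(\sigma,2m), \mu_Q)$, and it intertwines restricted tensor product structures on both sides. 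Third, the Whittaker functional $l_{U_r}^{\psi_r}$ on the cuspidal representation $\sigma[-m]$ is factorizable by uniqueness of Whittaker models. Fourth, the symplectic period $l_{Sp(2mr)}$ on $L(\sigma,2m)[\tfrac12]$ is factorizable by the even case already handled via \cite{Odist}.

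The main (mild) obstacle will be organizing the bookkeeping so that the tensor product structure on $I(\tau,\Lambda_{2m+1})$ is transported consistently through $M_{-1}(w_Q)$ and $E_{-1}^Q$ onto the factorizable structure of the target $I_Q^G(\sigma \otimes L(\sigma,2m),\mu_Q)$; once this is in place, factorizability of the mixed period follows by applying the local components of the Whittaker and symplectic functionals place by place. In particular, no new non-vanishing statements are required — the corollary is essentially a structural consequence of \eqref{periodofres} combined with the previously established factorizability of each of the three ingredients (the intertwining/residue operators, the Whittaker functional, and the symplectic period on even-length Speh representations).
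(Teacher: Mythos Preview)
Your approach is essentially the same as the paper's: reduce via \eqref{periodofres} to the factorizability of the Whittaker functional on $\sigma$ and the symplectic period on $L(\sigma,2m)$, together with the Eulerian nature of the intertwining and residue operators. The one substantive difference is your justification of the symplectic case: you appeal to local uniqueness \eqref{eq: symp uniq} (which is due to Heumos--Rallis \cite{MR91k:22036}, not \cite{Odist}) to force factorizability, whereas the paper instead invokes the explicit formula of \cite[Theorem 1.1]{OII}. Both arguments are valid; yours is more conceptual, the paper's is more constructive and yields an actual Euler product. Your added remarks on the factorizability of $M_{-1}(w_Q)$ and $E_{-1}^Q$ are correct and implicit in the paper's one-line deduction from \eqref{periodofres}.
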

\begin{proof}
It is well known that on a cuspidal representation, the Whittaker
functional is factorizable. It also follows from the explicit
formula in \cite[Theorem 1.1]{OII}, that the purely symplectic
period is factorizable on the residual spectrum. It then follows
from \eqref{periodofres} that the mixed period
$l_{H_{r,2mr}}^{\psi_r}$ is factorizable on $L(\sigma,2m+1)$ for
$\sigma$ a cuspidal representation of $G_r(\A)$.
\end{proof}
\begin{remark}
Note that the factorization of the period is obtained using a
formula for the mixed period despite the fact that local
multiplicity one for the mixed models is not yet known. Note also
that based on the conjectural disjointness of models for unitary
representations we expect the period $l_{H_{r,2k}}^{\psi}$ to vanish
on every discrete spectrum representation $\pi$ such that
$\kappa(\pi) \not=k$.
\end{remark}
\subsection{Some explicit formulas for the periods}\label{Lfunc}
Formula \eqref{periodofres} indicates that the mixed period is
related to special values of the Rankin-Selberg $L$-function
associated to $\sigma$ and its contragradient $\tilde \sigma$. Let
$L_\sigma(s)=L(s,\sigma \times \tilde\sigma)$ and fix once and for
all a finite set of places $S$ containing the infinite places and
such that for $v \not\in S$ the conductor of $\psi_v$ is $\oo_v$. If
$\sigma$ is an everywhere unramified cuspidal representation of
$G(\A)$ and $\phi_0$ is its $L^2$-normalized spherical vector, it
may be that $l_{U_n}^{\psi_n}(\phi_0)$ equals zero, but in any case
we can write $l_{U_n}^{\psi_n}(\sigma(x)\phi_0)=\prod_v
W^{\psi_v}_v(x)$ where $W^{\psi_v}_v$ is the spherical Whittaker
function for all $v$, and it is normalized so that
$W^{\psi_v}_v(e)=1$ for all $v \not\in S$. Clearly, there exists $g
\in G(\A)$ such that $l_{U_n}^{\psi_n}(\sigma(g)\phi_0)\not=0$ (in
fact, we may and do choose $g$ such that $g_v=e$ for $v \not\in S$).
We denote by $W^{\psi_v}_{1,v}$ the $L^2$-normalized spherical
Whittaker function and refer to \cite[\S2.2]{cup} for the
normalization of $W_{1,v}^{\psi_v}$ and for more details. It follows
from Jacquet's formula for the inner product of cusp forms that
\[
    \prod_{v \in
    S}\abs{\frac{W^{\psi_v}_v(g)}{W^{\psi_v}_{1,v}(g)}}^2=\frac1{\res_{s=1}L_\sigma^S(s)}
\]
where $L_\sigma^S(s)$ is the partial $L$-function away from $S$. We
then have
\begin{equation}\label{whittaker}
    \abs{l_{U_n}^{\psi}(\pi(g)\phi_0)}^2=\frac{\prod_{v \in
    S}\alpha_v(\sigma_v;g_v)}{\res_{s=1}L_\sigma(s)}
\end{equation}
where
\[
    \alpha_v(\sigma_v;g_v)=L_{\sigma_v}(1)\abs{W^{\psi_v}_{1,v}(g_{v})}^2.
\]
Assume now that $\sigma$ is an everywhere unramified cuspidal
representation of $G_r(\A)$ and let $\phi_0$ be the
$L^2$-normalized, spherical element of the discrete spectrum
representation $\pi=L(\sigma,t)$. We have the following formula for
the mixed period of $\phi_0$.
\begin{proposition}
For a certain normalization of Haar measures independent of $\sigma$
we have that if $t=2m$ is even then
\[
\abs{l_{Sp(n)}\phi_0}^2= \frac{L_\sigma(2)L_\sigma(4)\cdots
L_\sigma(2m)} {\res_{s=1}L_\sigma(s)L_\sigma(3)\cdots
L_\sigma(2m-1)}
\]
and if $t=2m+1$ is odd then there exists an element $g_0 \in
G_r(\A)$ such that
\[
\abs{l_{H_{r,2mr}}^\psi(\pi(\diag(g_0,1_{2mr}))\phi_0)}^2=\frac{\prod_{v
\in
S}\alpha_v(\sigma_v;g_{0,v})}{\res_{s=1}L_\sigma(s)}\prod_{j=1}^m
\frac{L_\sigma(2j)}{L_\sigma(2j+1)}.
\]
\end{proposition}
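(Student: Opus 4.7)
The plan is to treat the two cases separately, with the even case essentially a citation and the odd case a combination of the factorization formula \eqref{periodofres} with \eqref{whittaker} and the even case.

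For the even case $t = 2m$, I would invoke \cite[Theorem 1.1]{OII}, which expresses the purely symplectic period on the $L^2$-normalized spherical vector of $L(\sigma, 2m)$ as an Euler product of local unramified symplectic integrals. Evaluating these local integrals by a Casselman--Shalika-type unramified computation yields the claimed ratio $L_\sigma(2) L_\sigma(4) \cdots L_\sigma(2m) / (\res_{s=1} L_\sigma(s) \cdot L_\sigma(3) L_\sigma(5) \cdots L_\sigma(2m-1))$.

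For the odd case $t = 2m+1$, I would write $\phi_0 = A_\pi \cdot E_{-1}(\varphi^\circ)$ where $\varphi^\circ$ is the unramified spherical section of $I(\tau)$ and $A_\pi$ is the $L^2$-normalization constant for $\pi = L(\sigma, 2m+1)$ provided by the M{\oe}glin--Waldspurger residual inner-product formula. Applying \eqref{periodofres} to $\pi(\diag(g_0, 1_{2mr})) \phi_0$ and unwinding the right-hand side proceeds in three steps: (i) compute $M_{-1}(w_Q) \varphi^\circ$ by Gindikin--Karpelevich---for the cycle $w_Q = (1, 2, \ldots, 2m+1)$ the product of $2m$ GK ratios telescopes into essentially $L_\sigma(1)/L_\sigma(2m+1)$, and passage to the residue at $\Lambda_{2m+1}$ replaces the polar $L_\sigma(1)$ by $\res_{s=1} L_\sigma(s)$, producing an explicit global constant times a spherical section of $I(\tau, w_Q \Lambda_{2m+1})$; (ii) apply $E_{-1}^Q(\mu_Q)$ and evaluate at the identity, obtaining a spherical vector in $\sigma[-m] \otimes L(\sigma, 2m)[\frac{1}{2}]$ with a normalization constant tied to the $L^2$-norm of the spherical vector in $L(\sigma, 2m)$; (iii) apply $l_{U_r}^{\psi_r}$ to the $\sigma[-m]$-factor using \eqref{whittaker} (choosing $g_0$ with $g_{0,v} = e$ for $v \notin S$ so that the Whittaker functional is nonzero), and $l_{Sp(2mr)}$ to the $L(\sigma, 2m)[\frac{1}{2}]$-factor using the even case---noting that the $\frac{1}{2}$-twist is trivial on $Sp(2mr)$.

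Squaring the resulting expression and collecting the contributions, all intermediate $L_\sigma(j)$ for $2 \le j \le 2m$ cancel in the expected telescoping pattern, leaving the stated identity. The main obstacle will be the precise bookkeeping of three normalizations, each itself a product of $L_\sigma(j)$'s: the M{\oe}glin--Waldspurger $L^2$-norm $A_\pi$ of $\phi_0$, the spherical normalization in $\sigma \otimes L(\sigma, 2m)$ produced by $E_{-1}^Q(\mu_Q)$, and the Gindikin--Karpelevich residue of $M_{-1}(w_Q)$. The verification that these three conspire to give exactly the missing factor $\res_{s=1} L_\sigma(s) / L_\sigma(2m+1)$ beyond the Whittaker times symplectic contribution is the key algebraic step.
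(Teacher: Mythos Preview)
Your proposal is correct and follows essentially the same route as the paper: the even case is a citation (the paper cites \cite[Theorem 4]{Odist} directly rather than going via \cite{OII} and a local unramified computation), and the odd case is handled by writing $\phi_0$ as the $L^2$-normalized multi-residue, applying \eqref{periodofres}, computing $M_{-1}(w_Q)\varphi_0^{(2m+1)}=\frac{\res_{s=1}L_\sigma(s)}{L_\sigma(2m+1)}\varphi_0^{(2m+1)}$ via Gindikin--Karpelevich, then combining Langlands' inner-product formula, the symplectic period of $E_{-1}^{G_{2mr}}(\varphi_0^{(2m)})$ from \cite{Odist}, and \eqref{whittaker}. The only cosmetic difference is that the paper applies $l_{Sp(2mr)}$ directly to the unnormalized residue $E_{-1}^{G_{2mr}}(\varphi_0^{(2m)})$ (quoting the intermediate computation from the proof of \cite[Theorem 4]{Odist}) rather than renormalizing to the $L^2$-spherical vector and invoking the even-case statement, which saves one cancellation in the bookkeeping.
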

\begin{proof}
When $t$ is even, the result is \cite[Theorem 4]{Odist} and when
$t=1$ the formula is \eqref{whittaker}. Let $v_0$ be the
$L^2$-normalized spherical cusp form in the space of $\sigma$ and
let $\varphi_0^{(t)}$ be the spherical section in
$I(\tau,\Lambda_t)$ normalized so that
$\varphi_0^{(t)}(e)=v_0^{\otimes t}$. We now prove the formula for
$t=2m+1>1$ using \eqref{periodofres} and a computation similar to
that in \cite{Odist}. Indeed, we have
\[
    \phi_0=\frac{E_{-1}(\varphi_0^{(t)})}{\|E_{-1}(\varphi_0^{(t)})\|_2}.
\]
As explained in \cite{Odist}, Langlands showed that
\[
    \|E_{-1}(\varphi_0^{(t)})\|_2^{-2}=
    \frac{L_\sigma(2)L_\sigma(3) \cdots L_\sigma(t)}{(\res_{s=1}L_\sigma(s))^{t-1}}
\]
(note the typo in the proof of \cite[Theorem 4]{Odist} where
$\|E_{-1}(\varphi_0)\|_2^{2}$ should be replaced by
$\|E_{-1}(\varphi_0)\|_2^{-2}$). On the other hand
\[
    M_{-1}(w_Q)\varphi_0^{(2m+1)}=\frac{\res_{s=1}L_\sigma(s)}{L_\sigma(2m+1)}\varphi_0^{(2m+1)}
\]
and therefore
\[
    E_{-1}^Q (M_{-1}(w_Q)\varphi_0^{(2m+1)},\mu_Q)[e]=
    \frac{\res_{s=1}L_\sigma(s)}{L_\sigma(2m+1)}(v_0 \otimes
    E_{-1}^{G_{2mr}}(\varphi_0^{(2m)})).
\]
The computation in the proof of \cite[Theorem 4]{Odist} gives
\[
    l_{Sp(2mr)}(E_{-1}^{G_{2mr}}(\varphi_0^{(2m)}))=
    \frac{(\res_{s=1}L_\sigma(s))^{m-1}}{L_\sigma(3)L_\sigma(5)\cdots
    L_\sigma(2m-1)}.
\]
Plugging all this to \eqref{periodofres} we get that
\[
    \abs{l_{H_{r,2mr}}^{\psi_r}(\phi_0)}^2=\abs{l_{U_r}^{\psi_r}(v_0)}^2
    \frac{L_\sigma(2)L_\sigma(4)\cdots L_\sigma(2m)}{L_\sigma(3)L_\sigma(5)\cdots
    L_\sigma(2m+1)}.
\]
As already explained, for some everywhere unramified cuspidal
representations $\sigma$ it may be that $l_{U_r}^{\psi_r}(v_0)=0$,
however, there exists $g_0 \in G_r(\A)$ such that
$l_{U_r}^{\psi_r}(\sigma(g_0)v_0)\not=0$. A similar computation then
gives that for $g=\diag(g_0,1_{2mr})$ we have
\[
    \abs{l_{H_{r,2mr}}^{\psi_r}(\pi(g)\phi_0)}^2=\abs{l_{U_r}^{\psi_r}(\sigma(g_0)v_0)}^2
    \frac{L_\sigma(2)L_\sigma(4)\cdots L_\sigma(2m)}{L_\sigma(3)L_\sigma(5)\cdots
    L_\sigma(2m+1)}.
\]
The formula now follows from \eqref{whittaker}.
\end{proof}
\section{Local mixed models}\label{local}
Let $F$ be a non-archimedean local field of characteristic zero. For
any algebraic group $Y$ defined over $F$ we denote from now on by
$Y$ the group $Y(F)$ of $F$-rational points. Our goal is to prove
Theorem \ref{localmainintro}, i.e. to show that for any irreducible,
unitary representation $\pi$ of $G$ we can attach an integer $\kappa(\pi)
\in [0,[\frac n2]]$ so that
\[
    \Hom_{H_{n-2\kappa(\pi),2\kappa(\pi)}}(\pi,\psi_{n-2\kappa(\pi)}) \not=0.
\]
The index $\kappa(\pi)$ is expressed in terms of the classification
of the unitary dual of $G$ obtained by Tadic in \cite{MR870688}. Our
proof is based on the properties of derivatives for representations
of $G$ introduced by Gelfand-Kazhdan in \cite{MR0404534}. In
\cite[Theorem 6.1]{MR584084}, Zelevinsky classified all irreducible
representation of $G$ in terms of cuspidal representations.
Furthermore, the highest derivative of any irreducible
representation is irreducible. The derivatives are computed by
Zelevinsky in \cite[Theorem 8.1]{MR584084}. This computation is for
derivatives in the `opposite direction' to those suited for the
study of Klyachko models with respect to the pairs
$(H_{r,2k},\psi_r)$. It will be more convenient to apply
Zelevinsky's results as they stand. We therefore begin by
introducing a closely related family of mixed models compatible with
the derivatives computed by Zelevinsky.
\subsection{Another family of mixed models}
The derivatives computed by Zelevinsky are more suited to the study
of mixed models with respect to the pairs $(H_{2k,r}',\psi_r')$
where
\[
    H_{2k,r}'=\{\left(
                \begin{array}{cc}
                  h & X\\
                  0 & u \\
                \end{array}
              \right):h \in Sp(2k),\,u \in U_r,\,X \in M_{2k \times
              r}\}
\]
and
\[
    \psi_r'\left(
                \begin{array}{cc}
                  h & X\\
                  0 & u \\
                \end{array}
              \right)=\psi(u_{1,2}+\cdots+u_{r-1,r})
\]
whenever $u=(u_{i,j}) \in U_r$. The next lemma is relating between
the two families of mixed models.
\begin{lemma}\label{lemma: H to H'}
Let $(\pi,V)$ be a representation of $G$. There is a linear
isomorphism
\[
    \Hom_{H_{r,2k}}(\pi,\psi_r)\simeq \Hom_{H_{2k,r}'}(\tilde\pi,\bar\psi_r').
\]

\end{lemma}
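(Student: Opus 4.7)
The plan is to construct an explicit group isomorphism $\phi: H_{2k,r}' \to H_{r,2k}$ satisfying $\psi_r \circ \phi = \bar\psi_r'$, implemented as the composition of the Gelfand--Kazhdan involution $\sigma(g) = {}^t g^{-1}$ with conjugation by a suitable element $\gamma \in G$. Combined with the standard realization $\tilde\pi \simeq \pi^\sigma$ of the contragredient for $GL_n$ (on the same underlying space, via $\tilde\pi(g) = \pi(\sigma(g))$), this will yield the Hom-level isomorphism.

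I would set up $\phi$ as follows. Let $\gamma_1 \in G$ be the block-permutation matrix with $\gamma_1 e_i = e_{i+r}$ for $i \le 2k$ and $\gamma_1 e_i = e_{i-2k}$ for $i > 2k$, let $\gamma_2 = \diag(w_r, I_{2k})$, and set $\gamma = \gamma_2 \gamma_1$. A direct block computation, using that ${}^t g^{-1} \in Sp(2k)$ for $g \in Sp(2k)$ and that $w_r \cdot {}^t u^{-1} \cdot w_r^{-1} \in U_r$ for $u \in U_r$, gives for $h' = \left(\begin{smallmatrix} g & X \\ 0 & u \end{smallmatrix}\right) \in H_{2k,r}'$ the identity
\[
    \phi(h') := \gamma\, \sigma(h')\, \gamma^{-1} = \begin{pmatrix} w_r {}^t u^{-1} w_r^{-1} & * \\ 0 & {}^t g^{-1} \end{pmatrix} \in H_{r,2k},
\]
so that $\phi$ is a group isomorphism $H_{2k,r}' \to H_{r,2k}$. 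The character compatibility $\psi_r(\phi(h')) = \bar\psi_r'(h')$ reduces to the identity $\psi_r(w_r \cdot {}^t u^{-1} \cdot w_r^{-1}) = \bar\psi_r(u)$ on $U_r$, which follows from the entry computation $(w_r \cdot {}^t u^{-1} \cdot w_r^{-1})_{i,i+1} = -u_{r-i,r+1-i}$ and a substitution $j = r-i$ in the character sum.

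Finally, I would assemble the isomorphism explicitly. For $L \in \Hom_{H_{2k,r}'}(\tilde\pi, \bar\psi_r')$, set $\Phi(L) := L \circ \pi(\gamma^{-1})$. Writing an arbitrary $h \in H_{r,2k}$ as $h = \phi(h')$ (so $\sigma(h') = \gamma^{-1} h \gamma$) and combining the intertwining property of $L$ with the character identity above, one verifies directly that $\Phi(L) \circ \pi(h) = \psi_r(h)\, \Phi(L)$; hence $\Phi(L) \in \Hom_{H_{r,2k}}(\pi, \psi_r)$. The assignment is linear with inverse $L' \mapsto L' \circ \pi(\gamma)$, yielding the asserted isomorphism. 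The only real obstacle is the construction of $\gamma$ — once one identifies the element that both transports the subgroup and produces the required conjugate twist of the character, the remaining assertions are formal consequences.
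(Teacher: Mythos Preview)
Your proof is correct and is essentially the paper's own argument: your matrix $\gamma=\gamma_2\gamma_1$ coincides with the paper's $w=\left(\begin{smallmatrix}0 & w_r\\ 1_{2k} & 0\end{smallmatrix}\right)$, and both proofs combine conjugation by this element with the Gelfand--Kazhdan involution $g\mapsto {}^tg^{-1}$ together with the identification $\tilde\pi\simeq\pi^\sigma$. The only cosmetic difference is that the paper packages conjugation and transpose-inverse into a single automorphism $\tau$ (so the Hom-spaces become literally equal under the identity on $V$), whereas you keep them separate and therefore record the explicit isomorphism $L\mapsto L\circ\pi(\gamma^{-1})$; your added verification of the superdiagonal identity $(w_r\,{}^tu^{-1}\,w_r)_{i,i+1}=-u_{r-i,r+1-i}$ is exactly the character computation the paper leaves implicit.
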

\begin{proof}
Fix $r$ and $2k$ and let $H=H_{r,2k}$ and $H'=H_{2k,r}'$. Let $\tau$
be the involution on $G$ defined by
\[
    \tau(g)=w{}^t g^{-1} w^{-1}
\]
where
\[
    w=\left(
        \begin{array}{cc}
          0 & w_r \\
          1_{2k} & 0 \\
        \end{array}
      \right).
\]
Let $(\pi^\tau,V)$ be the representation of $G$ on $V$ given by
$\pi^\tau(g)v=\pi(\tau(g))v$. It is well known (\cite{MR0404534})
that $\pi^\tau$ is isomorphic to $\tilde \pi$ and therefore that
there is a linear isomorphism
\[
    \Hom_{H'}(\tilde\pi,\bar\psi_r')=\Hom_{H'}(\pi^\tau,\bar\psi_r').
\]
Note further that $\tau(H)=H'$ and that
$\psi_r(h)=\bar\psi_r'(\tau(h)).$ This implies that the identity map
on the space of $\pi$ defines a linear isomorphism
\[
    \Hom_{H}(\pi,\psi_r)=\Hom_{H'}(\pi^\tau,\bar\psi_r').
\]
\end{proof}
\subsection{The dual of $G$}
Zelevinski classified in \cite{MR584084} all irreducible
representations of $G$ in terms of cuspidal representations. For a
representation $\sigma$ of $G_r$ and for $\lambda \in \cc$ let
$\sigma[\lambda]=\abs{\det}^\lambda \sigma.$ If $\sigma_i$ is a
representation of $G_{r_i},\,i=1,\dots,t$ we denote by $\sigma_1
\times \cdots \times \sigma_t$ the representation of
$G_{r_1+\cdots+r_t}$ parabolically induced from $\sigma_1 \otimes
\cdots \otimes \sigma_t$.  Let $\ccc$ denote the collection of all
irreducible, cuspidal representations of $G_r$ for all positive
integers $r$. For any $a,\,b \in \rr$ such that $0 \le b-a \in \zz$
and any $\rho \in \ccc$ the set
\[
    \Delta=[a,b]^{(\rho)}=\{\rho[a+i]:0 \le i \le b-a\}
\]
is called a segment. The representation $\rho[a] \times \rho[a+1]
\times \cdots \times \rho[b]$ has a unique irreducible
subrepresentation which is denoted by $\langle \Delta \rangle$. We
say that a segment $\Delta=[a,b]^{(\rho)}$ precedes the segment
$\Delta'=[a',b']^{(\rho)}$ if $\Delta' \not\subset \Delta$,
$\rho'[a']=\rho[a+k]$ for some integer $k>0$ and $\Delta \cup
\Delta'$ is also a segment. Denote by $\oo$ the collection of all
multisets of segments $[a,b]^{(\rho)}$ with $\rho \in \ccc$ and
$b-a$ a non negative integer. For any $a \in \oo$ the segments in
$a$ can be arranged as $a=\{\Delta_1,\dots,\Delta_t\}$ so that for
all $1 \le i<j \le t$ the segment $\Delta_i$ does not precede
$\Delta_j.$ In this case the representation $\langle \Delta_1
\rangle \times \cdots \times \langle \Delta_t \rangle$ has a unique
irreducible subrepresentation denoted by $\langle a \rangle$. This
is the statement of \cite[Theorem 6.1 (a)]{MR584084}. The rest of
\cite[Theorem 6.1]{MR584084} is the following statement.
\begin{theorem}[Zelevinsky]
Any irreducible representation of $G$ is of the form $\langle
a\rangle$ for some $a \in \oo$ uniquely determined by the multiset
$a$.
\end{theorem}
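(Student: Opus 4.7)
The plan is to prove existence (every irreducible $\pi$ of $G$ equals $\langle a\rangle$ for some $a\in\oo$) and uniqueness ($\langle a\rangle \cong \langle a'\rangle$ forces $a=a'$) separately, both by induction on $n$, using the theory of derivatives computed in \cite[Theorem 8.1]{MR584084} together with the Bernstein--Zelevinsky theory of supercuspidal support and the geometric lemma.

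For existence, let $\pi$ be an irreducible representation of $G_n$. By Bernstein--Zelevinsky, $\pi$ has a well-defined supercuspidal support, a multiset of cuspidal representations of smaller $G_{r}$'s. This support decomposes into disjoint cuspidal lines $\{\rho[a]:a\in\zz\}$ for various $\rho\in\ccc$; since representations supported on disjoint lines factor as external tensor products through parabolic induction, one reduces to the case of a single line. Now pass to the highest derivative $\pi^{(-)}$. By \cite[Theorem 8.1]{MR584084} the highest derivative of an irreducible is irreducible, so by induction $\pi^{(-)} = \langle a^-\rangle$ for a uniquely determined multiset $a^-\in\oo$. The explicit formula for the derivative on a standard module $\langle\Delta_1\rangle\times\cdots\times\langle\Delta_t\rangle$ prescribes how the multiset changes (stripping the rightmost cuspidal from each segment whose right endpoint is maximal); inverting this operation, together with knowledge of the supercuspidal support of $\pi$, reconstructs a unique candidate $a=\{\Delta_1,\dots,\Delta_t\}$. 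Arranging the $\Delta_i$ in non-preceding order and applying Frobenius reciprocity, I would check that $\pi$ embeds in $\langle\Delta_1\rangle\times\cdots\times\langle\Delta_t\rangle$ and identify it as the unique irreducible subrepresentation, giving $\pi = \langle a\rangle$.

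For uniqueness, suppose $\langle a\rangle\cong\langle a'\rangle$. The supercuspidal supports of the two sides, computed from the cuspidal constituents of the segments, must coincide, so $a$ and $a'$ have the same underlying multiset of cuspidals. Compute the highest derivatives of both sides using the formula above; they agree, so the inductive hypothesis yields $a^- = (a')^-$. Combined with equality of supercuspidal supports, this pins down the stripped cuspidals uniquely on each side and forces $a=a'$.

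The main obstacle lies in the existence step, specifically in verifying that $\pi$ is genuinely the unique irreducible subrepresentation of $\langle\Delta_1\rangle\times\cdots\times\langle\Delta_t\rangle$ rather than merely some other irreducible subquotient sharing the expected supercuspidal support. This requires the linkage principle to control which subquotients of the standard module can appear, combined with the fact that the highest derivative is exact and sees only the ``leading'' constituent; equivalently, one uses that among all irreducible subquotients of the standard module, $\langle a\rangle$ is singled out by having the largest possible highest derivative in the dominance order on $\oo$.
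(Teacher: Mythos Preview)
The paper does not give its own proof of this statement: it is stated as Zelevinsky's classification theorem and attributed to \cite[Theorem~6.1]{MR584084} without argument. So there is no proof in the paper to compare your proposal against.

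On the merits of your sketch itself, note a potential circularity. You invoke \cite[Theorem~8.1]{MR584084} to assert that the highest derivative of an irreducible representation is irreducible; but Theorem~8.1 as quoted in this paper is the formula $\langle a\rangle^{(\text{highest})}=\langle a^-\rangle$, which is a statement \emph{about} representations already known to be of the form $\langle a\rangle$. It does not, on its face, say anything about an arbitrary irreducible $\pi$ before the classification is established. In Zelevinsky's original paper the logical order is the reverse of what you propose: Theorem~6.1 (the classification) comes first and is proved by analyzing subquotients of products of segment representations via the geometric lemma and a multiplicity-one argument for the socle; Theorem~8.1 is proved afterwards and relies on Theorem~6.1. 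Your inductive scheme via highest derivatives can be made to work, but you would first need an independent proof that the highest derivative of an arbitrary irreducible is irreducible (this is available in the Bernstein--Zelevinsky theory, but it is not what Theorem~8.1 says), and you would need to justify the ``inversion'' step---reconstructing $a$ from $a^-$ together with the supercuspidal support---more carefully than just asserting it, since several multisegments can have the same $a^-$.
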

\subsection{Derivatives of representations of $G$}
Derivatives of representations of $G$ were introduced in
\cite{MR0404534}. For a non negative integer $k \le n$ the $k$th
derivative is a functor taking a representation $\pi$ of $G$ to a
representation $\pi^{(k)}$ of $G_{n-k}$. It is defined as follows.
Let $P_k$ be the mirabolic subgroup of matrices in $G_k$ with last
row $(0,\dots,0,1)$ and let $V_k$ be its unipotent radical. We imbed
$G_{k-1}$ in the upper left block of $G_k$ whenever convenient. The
functors $\Phi^-$ from representations of $P_k$ to representations
of $P_{k-1}$ and
 $\Psi^-$ from representations of $P_k$ to representations of $G_{k-1}$ are defined in \cite[\S 3.2]{MR58:28310}.
For a representation $\tau$ of $P_k$, $\Phi^-(\tau)$ is the
normalized Jacquet functor of $\tau$ with respect to $V_k$ and the
character $\theta(v)=\psi(v_{k-1,k}),\,v=(v_{i,j} )\in V_k$ regarded
as a representation of $P_{k-1}$ imbedded in $P_k$ and
$\Psi^-(\tau)$ is the normalized Jacquet module of $\tau$ with
respect to $V_k$ and the trivial character regarded as a
representation of $G_{k-1}$ imbedded in $P_k$. If $\Phi^{- (m)}$
denotes the functor $\Phi^- $ applied $m$ times (hence it is a
functor from representations of $P_n$ to representations of
$P_{n-m}$) and $\pi$ is a representation of $G$ then the $r$th
derivative of $\pi$ is given by
\[
\pi^{(r)}=\Psi^- \Phi^{-(r)}(\pi_{|P_n}).
\]
The main property of derivatives relevant to the study of mixed
models lies in the content of \cite[Proposition 3.7]{MR584084}. We
now recall this property. For any representation $(\pi,V)$ of $G_n$
let $(\pi^{(r)},V^{(r)})$ be the $r$th derivative. Then there is a
surjective morphism $A=A^{(r)}_\psi(\pi):V \to V^{(r)}$ so that
\begin{equation}\label{derivative property}
A(\pi
\left(
  \begin{array}{cc}
    g & X \\
    0 & u \\
  \end{array}
\right) v)=\psi_r(u)\pi^{(r)}(g)(Av)
\end{equation}
for all $g \in G_{n-r},\,u \in U_r,\,X \in M_{n-r \times r}$ and $v
\in V.$ For any subgroup $Y$ of $G_{n-r}$ let
\[
    H_{Y,r}=\{\left(
              \begin{array}{cc}
                y & X \\
                0 & u \\
              \end{array}
            \right):y \in Y,\,X \in M_{n-r \times r},\,u \in U_r\}.
\]
the map $A$ provides the identification
\begin{equation}\label{eq: derivative functionals}
    \Hom_{H_{Y,r}}(\pi,\psi_r)=\Hom_{Y}(\pi^{(r)},1).
\end{equation}

The functor of $m$th derivative satisfies a `Leibnitz rule'. In
\cite[Lemma 4.5]{MR58:28310}, it is proved that for representations
$\sigma_1$ of $G_{k}$ and $\sigma_2$ of $G_r$ the representation
$(\sigma_1 \times \sigma_2)^{(m)}$ is glued together from
$\sigma_1^{(i)} \times \sigma_2^{(m-i)},\,i=0,\dots, m,$ i.e.  that
there is a filtration on $(\sigma_1 \times \sigma_2)^{(m)}$ with
factors $\sigma^{(i)} \times \sigma_2^{(m-i)}$. An easy consequence
of \cite[Proposition 4.13 (a),(b)]{MR58:28310} is the following.
\begin{lemma}\label{lemma: der onto}
There exists a surjective morphism from $(\sigma_1 \times \sigma_2)^{(m)}$ to $\sigma_1 \times \sigma_2^{(m)}.$
\end{lemma}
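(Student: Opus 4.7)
The plan is to extract the desired surjection from the Bernstein-Zelevinsky Leibniz rule for the derivative functor, as already cited in the preamble.

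By \cite[Lemma 4.5]{MR58:28310}, $(\sigma_1 \times \sigma_2)^{(m)}$ admits a filtration whose successive quotients are $\sigma_1^{(i)} \times \sigma_2^{(m-i)}$ for $i=0,1,\dots,m$. What must be extracted is that the particular term with $i=0$, namely $\sigma_1 \times \sigma_2^{(m)}$, occurs as a \emph{quotient} of $(\sigma_1 \times \sigma_2)^{(m)}$, and not merely as an interior subquotient.

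This is precisely what \cite[Proposition 4.13 (a), (b)]{MR58:28310} provides. Those statements describe, via the geometric lemma applied to the appropriate standard parabolic, short exact sequences of the form
\[
    0 \to \sigma_1^{(1)} \times \sigma_2 \to \Phi^-(\sigma_1 \times \sigma_2) \to \sigma_1 \times \Phi^-(\sigma_2) \to 0
\]
together with the analogous compatibility of $\Psi^-$ with the induction product, in which the `derivative-on-the-right' piece always appears as a quotient. I would iterate this $m$ times for $\Phi^-$ and finish with one application of $\Psi^-$; composing the resulting quotient maps yields a surjection
\[
    (\sigma_1 \times \sigma_2)^{(m)} = \Psi^- \Phi^{-(m)}(\sigma_1 \times \sigma_2) \twoheadrightarrow \sigma_1 \times \Psi^- \Phi^{-(m)}(\sigma_2) = \sigma_1 \times \sigma_2^{(m)}.
\]

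The only point requiring care is keeping straight which extremity of the Bernstein-Zelevinsky filtration is a subrepresentation and which is a quotient at each step, so that the iteration genuinely produces a chain of quotient maps rather than a mixture of sub- and quotient-inclusions. Once this is pinned down from the cited parts of Proposition 4.13, there is no convergence, non-vanishing, or multiplicity issue left to resolve, and the lemma follows.
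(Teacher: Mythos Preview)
Your approach is essentially the paper's: both rely on \cite[Proposition~4.13~(a),(b)]{MR58:28310} and exactness of $\Phi^-,\Psi^-$ to produce the chain of surjections. The paper organizes it a bit more cleanly by using (a) once to obtain a single surjection $(\sigma_1 \times \sigma_2)_{|P_{k+r}} \twoheadrightarrow \sigma_1 \times ({\sigma_2}_{|P_r})$ of mirabolic representations and then invoking (b), which says $\Omega(\sigma_1 \times \tau)=\sigma_1 \times \Omega(\tau)$ for $\tau$ a $P_r$-representation and $\Omega\in\{\Phi^-,\Psi^-\}$, so that exactness pushes the surjection through all at once---whereas the specific short exact sequence you display (with kernel ``$\sigma_1^{(1)}\times\sigma_2$'' inside $\Phi^-(\sigma_1\times\sigma_2)$) is not quite what (a),(b) assert and would need to be rewritten in terms of the mixed mirabolic inductions.
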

\begin{proof}

It follows from \cite[Proposition 4.13 (a)]{MR58:28310} that there
is a surjective morphism
\[
    (\sigma_1
\times \sigma_2)_{|P_{k+r}} \twoheadrightarrow \sigma_1 \times
({\sigma_2}_{|P_r})
\]
(see \cite[p. 457-458]{MR58:28310} for the meaning of the induced
representation $\sigma_1 \times ({\sigma_2}_{|P_r})$). Let $\Omega$
be either $\Phi^-$ or $\Psi^-$. From \cite[Proposition 4.13
(b)]{MR58:28310} for a representation $\tau$ of $P_r$ we have
\[
    \Omega(\sigma_1 \times \tau)=\sigma_1 \times
    \Omega(\tau).
\]
Together with the exactness of $\Omega$ this implies that there is a
surjective morphism
\[
\Omega((\sigma_1 \times \sigma_2)_{|P_{k+r}}) \twoheadrightarrow
\sigma_1 \times \Omega({\sigma_2}_{|P_r}).
\]
Iterating this argument the lemma follows from the definition of the
derivative.
\end{proof}
If $\pi^{(k)} \not=0$ and $\pi^{(m)}=0$ for all $m>k$ then
$\pi^{(k)}$ is called the highest derivative of $\pi$. For any
segment $\Delta=[a,b]^{(\rho)}$ let $\Delta^-=[a,b-1]^{(\rho)}$. If
$a \in \oo$ is any multiset of segments , let $a^-$ be the multiset
of segments $\Delta^-$ so that $\Delta$ is a segment in $a$ and
$\Delta^-$ is not empty.
\begin{theorem}[\cite{MR584084}, Theorem 8.1]\label{highest derivative}
For all $a \in \oo$ the highest derivative of $\langle a \rangle$ is
$\langle a^- \rangle$.
\end{theorem}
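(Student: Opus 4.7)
The plan is to prove the theorem by induction on $|a| = \sum_{\Delta \in a} |\Delta|$, using Zelevinsky's classification of $\langle a \rangle$ as the unique irreducible subrepresentation of the standard module $\langle \Delta_1 \rangle \times \cdots \times \langle \Delta_t \rangle$ (with segments ordered so that no $\Delta_i$ precedes $\Delta_j$ for $i<j$), together with the Leibnitz rule recalled in the excerpt. The derivative functors $\pi \mapsto \pi^{(k)}$ are compositions of Jacquet functors and are therefore exact, so the embedding $\langle a \rangle \hookrightarrow \langle \Delta_1 \rangle \times \cdots \times \langle \Delta_t \rangle$ is preserved under taking any derivative.

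First I would handle the case of a single segment $\Delta = [a,b]^{(\rho)}$ with $\rho \in \ccc$ of degree $d$, by induction on $b-a$. Cuspidality of $\rho$ implies that its only nonvanishing derivatives are $\rho[i]^{(0)} = \rho[i]$ and $\rho[i]^{(d)} = \triv$, since a cuspidal representation has no proper Jacquet modules. Starting from the embedding $\langle \Delta \rangle \hookrightarrow \rho[a] \times \langle \Delta' \rangle$ with $\Delta' = [a+1,b]^{(\rho)}$, the Leibnitz expansion of $(\rho[a] \times \langle \Delta' \rangle)^{(m)}$ collapses to at most two graded pieces per order, and a socle analysis on $\rho[a] \times \langle \Delta'^- \rangle$ identifies $\langle \Delta \rangle^{(d)}$ with $\langle \Delta^- \rangle$ while ruling out higher-order derivatives. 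For the general multiset case, iterating the Leibnitz rule on $(\langle \Delta_1 \rangle \times \cdots \times \langle \Delta_t \rangle)^{(m)}$ shows that at total order $N = \sum_i \deg(\rho_i)$ only the top-on-each-factor piece $\langle \Delta_1^- \rangle \times \cdots \times \langle \Delta_t^- \rangle$ is non-zero, and that no non-zero graded piece appears for $m > N$. The non-precedence ordering of the $\Delta_i$'s is inherited by the $\Delta_i^-$'s, so Zelevinsky's classification identifies the socle of this target standard module as $\langle a^- \rangle$.

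The main obstacle lies in the final identification: exactness of the derivative functor only yields an embedding $\langle a \rangle^{(N)} \hookrightarrow \langle \Delta_1^- \rangle \times \cdots \times \langle \Delta_t^- \rangle$, so one must separately establish (i) that $\langle a \rangle^{(N)}$ is non-zero (hence contains the socle $\langle a^- \rangle$) and (ii) that $\langle a \rangle^{(N)}$ equals that socle rather than a strictly larger subrepresentation inside the target standard module. Both points can be settled at once by invoking the independent Bernstein-Zelevinsky result (recalled earlier in the excerpt) that the highest nonvanishing derivative of an irreducible representation is irreducible: combined with the embedding into a standard module whose socle is $\langle a^- \rangle$, irreducibility of $\langle a \rangle^{(N)}$ forces equality $\langle a \rangle^{(N)} = \langle a^- \rangle$ and pins down $N$ as the order of the highest derivative.
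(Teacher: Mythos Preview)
The paper does not give its own proof of this statement: it is quoted verbatim as \cite[Theorem~8.1]{MR584084} and used as a black box. So there is no ``paper's proof'' to compare with; your proposal is effectively a sketch of Zelevinsky's original argument.

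Your outline is along the right lines (single-segment case, Leibnitz rule on the standard module, exactness of the derivative functors), but the closing step has a genuine gap. Write $N=\sum_i d_i$ for the expected order. From the embedding $\langle a\rangle\hookrightarrow\langle\Delta_1\rangle\times\cdots\times\langle\Delta_t\rangle$ and exactness you get $\langle a\rangle^{(m)}=0$ for $m>N$ and an embedding $\langle a\rangle^{(N)}\hookrightarrow\langle\Delta_1^-\rangle\times\cdots\times\langle\Delta_t^-\rangle$. What you have \emph{not} shown is that $\langle a\rangle^{(N)}\neq 0$. Your proposed remedy, ``the highest nonvanishing derivative of an irreducible is irreducible,'' does not help here: that statement only concerns the highest \emph{nonvanishing} derivative, whatever its order may be; it gives no information about whether that order equals $N$ rather than some $N'<N$. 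If $\langle a\rangle^{(N)}=0$, then the irreducibility fact applies to $\langle a\rangle^{(N')}$ for some $N'<N$, and your embedding into the standard module for $a^-$ tells you nothing.

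There is also a potential circularity: in Zelevinsky's paper the irreducibility of the highest derivative is not proved prior to and independently of Theorem~8.1; it is a \emph{corollary} of the identification $\langle a\rangle^{(\text{top})}=\langle a^-\rangle$. The sentence in the present paper's introduction mentioning irreducibility is a summary of what Theorem~8.1 yields, not a separate input you may invoke. Zelevinsky's actual proof of non-vanishing proceeds differently: he analyzes the composition factors of the cokernel $\bigl(\langle\Delta_1\rangle\times\cdots\times\langle\Delta_t\rangle\bigr)/\langle a\rangle$, shows (via the combinatorics of linked segments and an induction on a suitable partial order on $\oo$) that each such factor has highest derivative of order strictly less than $N$, and then deduces by exactness that the $N$-th derivative of the standard module is carried entirely by $\langle a\rangle$. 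That step is the substantive part you are missing.
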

\subsection{The unitary dual of $G$}
We briefly review the classification of Tadic for the unitary dual
of the general linear groups (\cite[Theorem D]{MR870688}). A
representation $\delta$ of $G_r$ is called square integrable if its
matrix coefficients belong to $L^2(Z_G \bs G)$. Denote by $D^u$ the
collection of all irreducible, square integrable representations of
$G_r$ with $r$ ranging over all positive integers. For $\delta \in
D^u$ and a positive integer $t$ the representation
\[
\delta[\frac{1-t}2]\times \delta[\frac{3-t}2]\times \cdots \times \delta[\frac{t-1}2]
\]
has a unique irreducible subrepresentation which we denote by
$U(\delta,t).$ It will be convenient to allow the notation
$U(\delta,0)$ for the trivial representation of $G_0=\{e\}$. Let $B$
be the collection of all representations of the form $U(\delta,t)$
or $U(\delta,t)[\alpha] \times U(\delta,t)[-\alpha]$ where $\delta
\in D^u,$ $t$ is a positive integer and $0<\alpha< \frac12$.
\begin{theorem}[Tadic]\label{tadic}
For every $\sigma_1,\,\dots,\sigma_t \in B$ the representation
$\sigma_1 \times \cdots \times \sigma_t$ is irreducible and unitary.
Any irreducible, unitary representation of $G$ is of this form
uniquely determined (up to reordering) by the multiset
$\{\sigma_1,\dots,\sigma_t\}$.
\end{theorem}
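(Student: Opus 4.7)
The plan is to follow Tadic's original strategy, which combines the Langlands--Zelevinsky classification of the full admissible dual with carefully chosen unitarity criteria. The argument splits into three main parts: constructing the building blocks of $B$ as irreducible unitary representations, showing that iterated products from $B$ remain irreducible and unitary, and proving exhaustion.

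First, I would handle the building blocks. The Speh representation $U(\delta,t)$ can be shown to be unitary either globally, by exhibiting it as the local component of a residual automorphic representation in the spirit of M\oe glin--Waldspurger (so that square-integrability of the residue pulls back to a local invariant Hermitian form of definite signature), or purely locally using the Aubert--Zelevinsky involution and an explicit Jacquet-module computation to track the signature of the canonical Hermitian form on the standard module. For the complementary series $U(\delta,t)[\alpha]\times U(\delta,t)[-\alpha]$ with $0<\alpha<\tfrac12$, I would start at $\alpha=0$, where $U(\delta,t)\times U(\delta,t)$ is irreducible and unitary, and deform $\alpha$ along the real interval; irreducibility holds throughout $(0,\tfrac12)$ since the first reducibility hyperplane is at $\alpha=\tfrac12$, and the signature of the invariant Hermitian form varies continuously with $\alpha$, so unitarity persists.

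Second, to show that arbitrary products $\sigma_1\times\cdots\times\sigma_t$ with $\sigma_i\in B$ are irreducible and unitary, I would invoke Bernstein's theorem on $P$-invariant distributions and unitary parabolic induction: his result guarantees that parabolic induction from a tensor product of representations in $B$ produces an irreducible unitary representation, since the segments underlying distinct $\sigma_i$'s do not produce linkage in Zelevinsky's sense beyond what is already encoded inside each $\sigma_i$. Uniqueness of the multiset $\{\sigma_1,\dots,\sigma_t\}$ follows from the linear independence of characters of the irreducible constituents of the various standard modules, combined with Zelevinsky's classification \cite[Theorem 6.1]{MR584084}.

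The main obstacle is the exhaustion step: showing that \emph{every} irreducible unitary $\pi$ arises as such a product. I would start from the Langlands--Zelevinsky parameter $\pi=\langle a\rangle$ and use the invariant Hermitian form on $\pi$ together with Jacquet-module analysis to constrain the multiset $a$. The idea is that any segment in $a$ not fitting into either a Speh block $U(\delta,t)$ or a complementary-series pair produces, via the Mackey filtration on the Jacquet module, an irreducible subquotient on which the restricted Hermitian form is forced to have mixed signature, contradicting unitarity. Carrying this out requires a careful bookkeeping of signatures through the Aubert--Zelevinsky involution and the reducibility hyperplanes in generalized principal series, and this is where the technical heart of Tadic's proof lies.
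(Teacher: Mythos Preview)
The paper does not prove this theorem at all: it is quoted as Tadic's result \cite[Theorem D]{MR870688} and used as a black box in the subsequent construction of Klyachko models. There is therefore no ``paper's own proof'' to compare your proposal against.

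Your outline is a reasonable high-level sketch of Tadic's actual strategy, but a few points deserve tightening if you intend it as more than a roadmap. The unitarity of the Speh blocks $U(\delta,t)$ was not fully established in Tadic's original paper; he stated it as hypothesis (U0) and reduced it to Bernstein's theorem on irreducibility of unitary parabolic induction (his hypothesis (U1)), which Bernstein proved separately. Your claim that ``the segments underlying distinct $\sigma_i$'s do not produce linkage'' is not literally true---segments from different $\sigma_i$ can certainly be linked in Zelevinsky's sense---so irreducibility of the product really does need Bernstein's deep theorem rather than a combinatorial observation. Finally, the exhaustion argument in Tadic's paper is not primarily a signature computation on Jacquet modules as you describe; it proceeds by an induction on $n$ using the fact that any unitary representation whose Langlands data are not already of Speh/complementary-series shape must contain, after a suitable Jacquet functor, a non-unitary irreducible subquotient, which contradicts the unitarity of the Jacquet module as a $P$-representation. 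Your description captures the spirit but misattributes the mechanism.
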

\subsection{Derivatives of Speh representations.} We will have to compute highest derivatives for
certain representations induced from Speh representations. In order
to be able to apply Theorem \ref{highest derivative}, we need to
express the representations of the form $U(\delta,t)[\alpha]$ in terms of the
classification of Zelevinsky for the dual of $G$, i.e. we wish to
describe explicitly the set $a=a(\delta,t,\alpha) \in \oo$ such that
$U(\delta,t)[\alpha]=\langle a\rangle.$
For any $\delta \in D^u$ there is an irreducible, cuspidal, unitary representation $\rho \in \ccc$ and a positive integer
$d$ such that $\delta$ is the unique irreducible
subrepresentation of
\[
    \rho[\frac{d-1}2] \times \cdots \times \rho[\frac{1-d}2],
\]
i.e. such that $\delta=\langle \{\rho[\frac{d-1}2],\,\rho[\frac{d-3}2],\dots, \rho[\frac{1-d}2]\}\rangle$
is given in terms of singleton segments. Let
\[
    \Delta(t,\rho)=[\frac{1-t}2,\frac{t-1}2]^{(\rho)}.
\]
In \cite[Theorem A. 10 (iii)]{MR870688} it is proved that
\[
    U(\delta,t)=\langle\{\Delta(t,\rho[\frac{1-d}2]),  \Delta(t,\rho[\frac{3-d}2]),\dots,
    \Delta(t,\rho[\frac{d-1}2]) \}\rangle.
\]
As pointed out in \cite[Theorem 3.2]{MR1359141}, for any $\alpha \in
\rr$ we then have
\[
    a(\delta,t,\alpha)=\{\Delta(t,\rho[\frac{1-d}2+\alpha]),  \Delta(t,\rho[\frac{3-d}2+\alpha]),\dots,
    \Delta(t,\rho[\frac{d-1}2+\alpha]) \}.
\]
Note that
\[
    \Delta(t,\rho)^-=\Delta(t-1,\rho[-\frac12])
\]
and therefore that
\[
    a(\delta,t,\alpha)^-=\{\Delta(t-1,\rho[\frac{1-d}2+(\alpha-\frac12)]),  \dots,
    \Delta(t-1,\rho[\frac{d-1}2+(\alpha-\frac12)]) \}.
\]
By Theorem \ref{highest derivative} we get that the highest derivative of $U(\delta,t)[\alpha]$ is
\begin{equation}\label{eq: speh high der}
    U(\delta,t-1)[\alpha-\frac12].
\end{equation}
See \cite[Theorem 3]{MR1062967} for an analog for $GL_n(\rr).$
Applying the Leibnitz rule for derivatives and an easy inductive
argument we obtain the following.
\begin{lemma}\label{lemma: speh prod der}
Let $\delta_,\dots,\delta_m \in D^u, \,t_1,\dots,t_m$ positive
integers and $\alpha_1,\dots, \alpha_m \in \rr$. The highest
derivative of the representation
\[
    U(\delta_1,t_1)[\alpha_1] \times \cdots \times U(\delta_m,t_m)[\alpha_m]
\]
is the representation
\[
    U(\delta_1,t_1-1)[\alpha_1-\frac12] \times \cdots \times U(\delta_m,t_m-1)[\alpha_m-\frac12].
\]
\end{lemma}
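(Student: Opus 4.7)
I would proceed by induction on $m$. The base case $m=1$ is precisely the content of equation (\ref{eq: speh high der}), which identifies $U(\delta,t-1)[\alpha-\frac12]$ as the highest derivative of $U(\delta,t)[\alpha]$. For the inductive step, I would split off the last factor: set $\sigma = U(\delta_1,t_1)[\alpha_1] \times \cdots \times U(\delta_{m-1},t_{m-1})[\alpha_{m-1}]$ and $\tau = U(\delta_m,t_m)[\alpha_m]$, and write $k_\sigma$, $k_\tau$ for the degrees of their respective highest derivatives. By the inductive hypothesis, $\sigma^{(k_\sigma)}$ is the expected product over the indices $1,\dots,m-1$, while by (\ref{eq: speh high der}) we have $\tau^{(k_\tau)} = U(\delta_m,t_m-1)[\alpha_m-\frac12]$.

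Next I would invoke the Leibnitz rule (\cite[Lemma 4.5]{MR58:28310}) recalled just before Lemma \ref{lemma: der onto}: for every $k \ge 0$, the representation $(\sigma \times \tau)^{(k)}$ carries a filtration whose successive subquotients are $\sigma^{(i)} \times \tau^{(k-i)}$ for $0 \le i \le k$. For $k > k_\sigma + k_\tau$ every such subquotient has at least one factor lying strictly beyond the highest derivative of $\sigma$ or of $\tau$ and therefore vanishes, so $(\sigma \times \tau)^{(k)} = 0$. For $k = k_\sigma + k_\tau$ the same vanishing kills every subquotient except $\sigma^{(k_\sigma)} \times \tau^{(k_\tau)}$, so the filtration collapses and $(\sigma \times \tau)^{(k_\sigma+k_\tau)} \simeq \sigma^{(k_\sigma)} \times \tau^{(k_\tau)}$. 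Plugging in the inductive description of $\sigma^{(k_\sigma)}$ and the base case applied to $\tau$ yields precisely the representation claimed in the lemma.

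There is essentially no serious obstacle here: the argument reduces to the observation that, for representations whose highest derivative has already been pinned down, the Leibnitz filtration automatically forces highest-derivative degrees to add and highest derivatives to multiply under parabolic induction. The only point one must be careful with is that, in both halves of the splitting, the representation appearing is the \emph{highest} derivative and not merely some proper derivative; this is guaranteed by (\ref{eq: speh high der}) (together with Theorem \ref{highest derivative}) for the single Speh factor $\tau$ and by the inductive hypothesis for $\sigma$. With that in hand the induction closes without further computation.
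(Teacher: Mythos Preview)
Your proof is correct and follows essentially the same approach as the paper's: induction on $m$ with base case \eqref{eq: speh high der}, then the Leibnitz rule to show that derivatives beyond $k_\sigma+k_\tau$ vanish and that at $k_\sigma+k_\tau$ only the single subquotient $\sigma^{(k_\sigma)}\times\tau^{(k_\tau)}$ survives. The sole cosmetic difference is that the paper splits off the first factor rather than the last, which of course makes no difference to the argument.
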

\begin{proof}
Let $\delta_i$
be a representation of $G_{r_i}$.
We wish to show that
\begin{multline*}
    (U(\delta_1,t_1)[\alpha_1] \times \cdots \times U(\delta_{m},t_{m})[\alpha_{m}])^{(k)}=\\
    \begin{cases} 0 & k>r_1+\cdots+r_{m} \\
    U(\delta_1,t_1-1)[\alpha_1-\frac12] \times \cdots \times U(\delta_m,t_m-1)[\alpha_m-\frac12] & k=r_1+\cdots+r_{m}.
    \end{cases}
\end{multline*}
We have already proved  this when $m=1$. Assume by induction that this is true for $m-1$.
By Leibnitz rule, the representation $(U(\delta_1,t_1)[\alpha_1] \times \cdots \times U(\delta_{m},t_{m})[\alpha_{m}])^{(k)}$ is glued together from
\[
    (U(\delta_1,t_1)[\alpha_1])^{(i)} \times (U(\delta_2,t_2)[\alpha_2]\cdots \times U(\delta_{m},t_{m})[\alpha_{m}])^{(k-i)}
\]
for $i=0,\dots, k$. But if $k>r_1+\cdots +r_m$ then for each $i$ we
have either $i>r_1$ or $k-i>r_2+\cdots+r_m$ and therefore it follows
from the induction hypothesis that  $(U(\delta_1,t_1)[\alpha_1]
\times \cdots \times U(\delta_{m},t_{m})[\alpha_{m}])^{(k)}=0$.
Similarly, if $k=r_1+\cdots+r_m$ then any component with $i \ne r_1$
must vanish. It follows that
\begin{multline*}
    (U(\delta_1,t_1)[\alpha_1] \times \cdots \times U(\delta_{m},t_{m})[\alpha_{m}])^{(r_1+\cdots+r_m)}=\\
    (U(\delta_1,t_1)[\alpha_1])^{(r_1)} \times (U(\delta_2,t_2)[\alpha_2]\cdots \times U(\delta_{m},t_{m})[\alpha_{m}])^{(r_2+\cdots+r_m)}
\end{multline*}
and the lemma follows by the induction hypothesis.
\end{proof}
\subsection{Klyachko Models for some representations of $G$}
We are now ready to state our main local result.
\begin{theorem}\label{thm: main local}
Let $\delta_1,\dots, \delta_q,\delta_1',\dots, \delta _{q'}' \in
D^u$, let $m_1,\dots,m_q,\,m_1',\dots, m_{q'}'$ be non negative
integers and let $\alpha_1,\dots, \alpha_q,\alpha_1',\dots,
\alpha_{q'}' \in \rr$. Assume that $\delta_i$ is a representation of
$G_{r_i}$, that $\delta_i'$ is a representation of $G_{r_i'}$ and
that
\[
n=\sum_{i=1}^q (2m_i+1)r_i+\sum_{i=1}^{q'}2m_i'r_i'.
\]
Let $n=r+2k$ where
\[
    r=r_1+\cdots+r_q \text{ and }k=m_1r_1+\cdots +m_qr_q+m_1'r_1'+\cdots
    +m_{q'}r_{q'}.
\]
The representation
\begin{multline}\label{eq: general form rep}
U(\delta_1',2m_1')[\alpha_1'] \times \cdots \times U(\delta_{q'}',2m_{q'}')[\alpha_{q'}']\\
 \times
U(\delta_1,2m_1+1)[\alpha_1] \times \cdots \times U(\delta_q,2m_q+1)[\alpha_q]
\end{multline}
is $(H_{r,2k},\psi_r)$-distinguished.
\end{theorem}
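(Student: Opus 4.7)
The plan is to reduce Theorem \ref{thm: main local} via Lemma \ref{lemma: H to H'} and the Zelevinsky derivative to an $Sp(2k)$-distinguishedness statement for a parabolic induction of even-$t$ Speh representations, which is then supplied by the results of \cite{mix}.

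The first move is to pass to the contragredient via Lemma \ref{lemma: H to H'}, so that it suffices to show that $\Hom_{H_{2k,r}'}(\tilde\pi,\bar\psi_r')\neq 0$. Since $\widetilde{U(\delta,t)[\alpha]}\cong U(\tilde\delta,t)[-\alpha]$ and normalized parabolic induction commutes with contragredience, one may write $\tilde\pi\cong A\times B$, where
\[
A=U(\tilde\delta_1',2m_1')[-\alpha_1']\times\cdots\times U(\tilde\delta_{q'}',2m_{q'}')[-\alpha_{q'}']
\]
is the product of the dualised even-$t$ factors, of total degree $2(m_1'r_1'+\cdots+m_{q'}'r_{q'}')$, and
\[
B=U(\tilde\delta_1,2m_1+1)[-\alpha_1]\times\cdots\times U(\tilde\delta_q,2m_q+1)[-\alpha_q]
\]
is the product of the dualised odd-$t$ factors, of total degree $r+2(m_1r_1+\cdots+m_qr_q)$.

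Applying \eqref{eq: derivative functionals} with $Y=Sp(2k)$ (so that $H_{Y,r}=H_{2k,r}'$) reduces the problem to showing that the $r$-th derivative $\tilde\pi^{(r)}$ is $Sp(2k)$-distinguished, where $r=r_1+\cdots+r_q$. Lemma \ref{lemma: der onto}, applied with $\sigma_1=A$ and $\sigma_2=B$, yields a surjection
\[
(A\times B)^{(r)}\twoheadrightarrow A\times B^{(r)}.
\]
Since $r$ is precisely the drop in degree of the highest derivative of $B$, Lemma \ref{lemma: speh prod der} identifies $B^{(r)}$ with that highest derivative,
\[
U(\tilde\delta_1,2m_1)[-\alpha_1-\tfrac{1}{2}]\times\cdots\times U(\tilde\delta_q,2m_q)[-\alpha_q-\tfrac{1}{2}].
\]
Hence $A\times B^{(r)}$ is a parabolic induction of even-$t$ Speh representations on $G_{2k}$.

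One then invokes the purely symplectic results of \cite{mix}, together with the hereditary property of symplectic periods under parabolic induction, to conclude that $A\times B^{(r)}$ is $Sp(2k)$-distinguished. Pulling back a nonzero $Sp(2k)$-invariant functional through the surjection above produces one on $\tilde\pi^{(r)}$, and then unwinding the previous two reductions yields the desired element of $\Hom_{H_{r,2k}}(\pi,\psi_r)$. The main obstacle is this last step: the twists $-\alpha_i-\tfrac{1}{2}$ appearing after differentiation lie outside Tadic's unitary range, so one needs the input from \cite{mix} to be robust enough to cover an arbitrary parabolic induction of even-$t$ Speh representations with arbitrary real twists. Once this is in hand, the remaining steps are essentially formal bookkeeping, matching the parameters of $\pi$ with the decomposition $n=r+2k$.
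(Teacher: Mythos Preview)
Your proposal is correct and follows essentially the same route as the paper: pass via Lemma~\ref{lemma: H to H'} to the $(H_{2k,r}',\psi_r')$ problem, split $\pi$ (or equivalently $\tilde\pi$, which has the same shape) as $\sigma_1\times\sigma_2$ with $\sigma_1$ the even-$t$ part and $\sigma_2$ the odd-$t$ part, use Lemma~\ref{lemma: der onto} and Lemma~\ref{lemma: speh prod der} to land on a product of even-$t$ Speh representations, and then invoke \cite{mix}. The only difference is cosmetic: the paper observes that the class \eqref{eq: general form rep} is closed under contragredient and so works with $\pi$ itself rather than tracking $\tilde\pi$ explicitly. Your concern about the twists $-\alpha_i-\tfrac12$ falling outside the unitary range is not an obstacle: the paper invokes \cite[Proposition~2]{mix}, which applies to products of $U(\delta,t)[\alpha]$ with $t$ even and arbitrary real $\alpha$, exactly as needed here.
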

\begin{proof}
For $\delta \in D^u$ the contragradiant of $U(\delta,t)$ is
$U(\tilde\delta,t)$. Since the contragradiant of an induced
representation $\sigma_1 \times \cdots \times \sigma_t$ is $\tilde
\sigma_1 \times \cdots \times \tilde\sigma_t$ the contragradiant of
a representation that has the form \eqref{eq: general form rep} is
also of such a form. It therefore follows from Lemma \ref{lemma: H
to H'} that the theorem is equivalent to the statement that
representations $\pi$ of the form \eqref{eq: general form rep} are
$(H_{2k,r}',\psi_r')$-distinguished ($\psi$ and therefore $\bar\psi$
is an arbitrary non trivial character of $F$). Let
\[
\sigma_1 =U(\delta_1',2m_1')[\alpha_1'] \times \cdots \times
U(\delta_{q'}',2m_{q'}')[\alpha_{q'}'],
\]
\[
\sigma_2=U(\delta_1,2m_1+1)[\alpha_1] \times \cdots \times
U(\delta_q,2m_q+1)[\alpha_q]
\]
and $\pi=\sigma_1 \times \sigma_2.$ By Lemma \ref{lemma: der onto}
there is a surjective morphism $\mathfrak{p}:\pi^{(r)} \to \sigma_1
\times \sigma_2^{(r)}.$ There is also a surjective linear map
$A:\pi\to \pi^{(r)}$ satisfying the equivariance properties of
\eqref{derivative property}. By Lemma \ref{lemma: speh prod der} we
see that
\begin{multline*}
\sigma_1 \times \sigma_2^{(r)}=U(\delta_1',2m_1')[\alpha_1'] \times
\cdots \times U(\delta_{q'}',2m_{q'}')[\alpha_{q'}'] \times \\
U(\delta_1,2m_1)[\alpha_1-\frac12] \times \cdots \times
U(\delta_q,2m_q)[\alpha_q-\frac12]
\end{multline*}
is induced from Speh representations of the form
$U(\delta,t)[\alpha]$ with $t$ even. By \cite[Proposition 2]{mix},
there exists a non zero element $\ell \in Hom_{Sp(2k)}(\sigma_1
\times \sigma_2^{(r)},\cc).$ It follows that $\ell \circ
\mathfrak{p} \circ A$ is a non zero element of
$\Hom_{H_{2k,r}'}(\pi,\psi_r').$
\end{proof}
Using the notation of the statement of Theorem \ref{thm: main local} for a representation
$\pi$ of the form \eqref{eq: general form rep}
we define
\[
\kappa(\pi)=k.
\]
Note that by Theorem \ref{tadic} every irreducible, unitary representation $\pi$ of $G$ is of the form
\eqref{eq: general form rep} with $\abs{\alpha_i},\,\abs{\alpha_i'}<\frac12.$
In particular $\kappa(\pi)$ is defined.
The following corollary is then immediate from Theorem \ref{thm: main local}.
\begin{corollary}
Let $\pi$ be an irreducible, unitary representation of $G$ then
$\pi$ is
$(H_{n-2\kappa(\pi),2\kappa(\pi)},\psi_{n-2\kappa(\pi)})$-distinguished.
\end{corollary}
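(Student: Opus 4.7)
The plan is to derive this corollary directly from Theorem \ref{thm: main local} combined with Tadic's classification of the unitary dual (Theorem \ref{tadic}). First, by Tadic, an arbitrary irreducible unitary representation $\pi$ of $G$ is isomorphic to an irreducible parabolic product $\sigma_1 \times \cdots \times \sigma_t$ with each $\sigma_i \in B$. Every element of $B$ is either a single Speh representation $U(\delta,s)$, or a complementary pair $U(\delta,s)[\alpha] \times U(\delta,s)[-\alpha]$ with $0 < \alpha < \tfrac12$. Expanding each $\sigma_i$ this way, $\pi$ is realized as a product of factors of the form $U(\delta,s)[\alpha]$ with $\abs{\alpha} < \tfrac12$; note in particular that in any complementary pair both factors share the same parity of $s$.

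Next, since $\sigma_1 \times \cdots \times \sigma_t$ is irreducible, the induced product is (up to isomorphism) independent of the order of the factors. I would use this to permute the factors so that those with $s$ even appear first, followed by those with $s$ odd. Writing the even factors in the form $U(\delta_i',2m_i')[\alpha_i']$ and the odd factors in the form $U(\delta_i,2m_i+1)[\alpha_i]$, this exhibits $\pi$ in precisely the form \eqref{eq: general form rep} of Theorem \ref{thm: main local}, with $\abs{\alpha_i},\abs{\alpha_i'}<\tfrac12$.

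Finally, with this presentation the integer $k = m_1 r_1 + \cdots + m_q r_q + m_1' r_1' + \cdots + m_{q'}' r_{q'}'$ attached to $\pi$ coincides with $\kappa(\pi)$ as defined in the paragraph following Theorem \ref{thm: main local}, and $r + 2k = n$. Applying Theorem \ref{thm: main local} immediately yields that $\pi$ is $(H_{n-2\kappa(\pi),2\kappa(\pi)},\psi_{n-2\kappa(\pi)})$-distinguished, as required.

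The substantive content is entirely located in Theorem \ref{thm: main local}, which I am allowed to assume; the only mild points to check in the corollary itself are (i) that $\kappa(\pi)$ is well-defined, i.e.\ independent of the presentation, which follows from the uniqueness clause in Theorem \ref{tadic}, and (ii) the legitimacy of reordering the factors, which is justified by the irreducibility statement in Tadic's theorem. There is therefore no real obstacle at this step beyond matching notation between Tadic's and Zelevinsky's parameterizations.
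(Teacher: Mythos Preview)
Your proposal is correct and follows exactly the approach the paper takes: the paper simply remarks before the corollary that by Tadic's theorem every irreducible unitary $\pi$ is of the form \eqref{eq: general form rep} with $\abs{\alpha_i},\abs{\alpha_i'}<\tfrac12$, so $\kappa(\pi)$ is defined and the corollary is immediate from Theorem \ref{thm: main local}. You have merely spelled out in detail what the paper leaves implicit---in particular the observation that complementary pairs have a common parity (so reordering only the $\sigma_i\in B$ suffices) and the justification of that reordering via irreducibility.
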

This proves in particular Theorem \ref{localmainintro}.


\begin{thebibliography}{Off06b}

\bibitem[BZ77]{MR58:28310}
I.~N. Bernstein and A.~V. Zelevinsky.
\newblock Induced representations of reductive {${\wp}$}-adic groups. {I}.
\newblock {\em Ann. Sci. \'Ecole Norm. Sup. (4)}, 10(4):441--472, 1977.

\bibitem[FG03]{fulman-2003}
Jason Fulman and Robert Guralnick.
\newblock Conjugacy class properties of the extension of $GL(n,q)$ generated by
the inverse tranpose involution, arxiv:math.gr/0304047 v1 3, 2003.

\bibitem[GK75]{MR0404534}
I.~M. Gelfand and D.~A. Kajdan.
\newblock Representations of the group {${\rm GL}(n,K)$} where {$K$} is a local
  field.
\newblock In {\em Lie groups and their representations (Proc. Summer School,
  Bolyai J\'anos Math. Soc., Budapest, 1971)}, pages 95--118. Halsted, New
  York, 1975.


\bibitem[Heu93]{MR1216185}
Michael~J. Heumos.
\newblock Models and periods for automorphic forms on {${\rm GL}\sb n$}.
\newblock In {\em Representation theory of groups and algebras}, volume 145 of
  {\em Contemp. Math.}, pages 135--144. Amer. Math. Soc., Providence, RI, 1993.

\bibitem[HR90]{MR91k:22036}
Michael~J. Heumos and Stephen Rallis.
\newblock Symplectic-{W}hittaker models for ${\rm {g}l}\sb n$.
\newblock {\em Pacific J. Math.}, 146(2):247--279, 1990.

\bibitem[Jac84]{MR85k:22045}
Herv{\'e} Jacquet.
\newblock On the residual spectrum of $GL(n)$.
\newblock In {\em Lie group representations, II (College Park, Md.,
 1982/1983)}, pages 185--208. Springer, Berlin, 1984.

\bibitem[JPSS83]{MR701565}
H.~Jacquet, I.~I. Piatetskii-Shapiro, and J.~A. Shalika.
\newblock Rankin-{S}elberg convolutions.
\newblock {\em Amer. J. Math.}, 105(2):367--464, 1983.


\bibitem[JR92]{MR93b:22035}
Herv{\'e} Jacquet and Stephen Rallis.
\newblock Symplectic periods.
\newblock {\em J. Reine Angew. Math.}, 423:175--197, 1992.


\bibitem[IS91]{MR1129515}
N.~F.~J. Inglis and J.~Saxl.
\newblock An explicit model for the complex representations of the finite
  general linear groups.
\newblock {\em Arch. Math. (Basel)}, 57(5):424--431, 1991.

\bibitem[Kl84]{kly}
A.~A. Klyachko.
\newblock Models for complex representations of the groups {${\rm GL}(n,\,q)$}.
\newblock {\em Matematicheski\u\i\ Sbornik. Novaya Seriya}, 48(2):365--378,
  1984.

\bibitem[LO07]{cup}
Erez Lapid and Omer Offen.
\newblock Compact unitary periods\date{}.
\newblock {\em Compositio Math.}, 143(2):323–--338, 2007.

\bibitem[MW89]{MR91b:22028}
C.~M{\oe{g}}lin and J.-L. Waldspurger.
\newblock Le spectre r\'esiduel de $GL(n)$.
\newblock {\em Ann. Sci. \'Ecole Norm. Sup. (4)}, 22(4):605--674, 1989.

\bibitem[N07]{Nien}
C. Nien.
\newblock Klyachko Models for General Linear Groups of Rank 5 over a $p$-adic
Field.
\newblock {\em Candian Journal of Mathematics}, to appear.




\bibitem[Off06a]{OII}
Omer Offen.
\newblock On symplectic periods of discrete spectrum of $GL_{2n}$\date{}.
\newblock {\em Isr. Jour. Math.}, 154:253--298, 2006.

\bibitem[Off06b]{Odist}
Omer Offen.
\newblock Distinguished residual spectrum\date{}.
\newblock {\em Duke Math. J.}, 134(2):313--357, 2006.

\bibitem[OS07]{mix}
Omer Offen and Eitan Sayag.
\newblock On unitary representations of $GL_{2n}$ distinguished by the
  symplectic group\date{}.
\newblock {\em J. Number Theory}, 125:344--–355, 2007.

\bibitem[Rod73]{MR0354942}
Fran{\c{c}}ois Rodier.
\newblock Whittaker models for admissible representations of reductive
  {$p$}-adic split groups.
\newblock In {\em Harmonic analysis on homogeneous spaces (Proc. Sympos. Pure
  Math., Vol. XXVI, Williams Coll., Williamstown, Mass., 1972)}, pages
  425--430. Amer. Math. Soc., Providence, R.I., 1973.

\bibitem[SS90]{MR1062967}
Siddhartha Sahi and Elias~M. Stein.
\newblock Analysis in matrix space and {S}peh's representations.
\newblock {\em Invent. Math.}, 101(2):379--393, 1990.

\bibitem[Tad86]{MR870688}
Marko Tadi{\'c}.
\newblock Classification of unitary representations in irreducible
  representations of general linear group (non-{A}rchimedean case).
\newblock {\em Ann. Sci. \'Ecole Norm. Sup. (4)}, 19(3):335--382, 1986.

\bibitem[Tad95]{MR1359141}
M.~Tadi{\'c}.
\newblock On characters of irreducible unitary representations of general
  linear groups.
\newblock {\em Abh. Math. Sem. Univ. Hamburg}, 65:341--363, 1995.


\bibitem[TV05]{thiem-2005}
Nathaniel Thiem and C.~Ryan Vinroot.
\newblock On the characteristic map of finite unitary groups,
  arxiv:math.rt/0511220 v1, 2005.

\bibitem[Vin06]{MR2206367}
C.~Ryan Vinroot.
\newblock Involutions acting on representations.
\newblock {\em J. Algebra}, 297(1):50--61, 2006.

\bibitem[Zel80]{MR584084}
A.~V. Zelevinsky.
\newblock Induced representations of reductive $p$-adic groups. {II}.
  {O}n irreducible representations of {${\rm GL}(n)$}.
\newblock {\em Ann. Sci. \'Ecole Norm. Sup. (4)}, 13(2):165--210, 1980.

\end{thebibliography}
\end{document}